\numberwithin{equation}{section}
\theoremstyle{definition}
\newtheorem{theorem}[equation]{Theorem}
\newtheorem{lemma}[equation]{Lemma}
\newtheorem{proposition}[equation]{Proposition}
\newtheorem{corollary}[equation]{Corollary}
\newtheorem{definition}[equation]{Definition}
\newtheorem{question}[equation]{Question}
\newtheorem{remark}[equation]{Remark}
\renewcommand{\subsection}{\@startsection{subsection}{2}{0pt}{-3ex
plus -1ex minus -0.2ex}{-2mm plus -0pt minus
-2pt}{\normalfont\bfseries}} \makeatother
\newcommand{\Q}{\mathbf{Q}}
\newcommand{\R}{\mathbf{R}}
\newcommand{\C}{\mathbf{C}}
\newcommand{\Id}{\operatorname{Id}}
\newcommand{\Hilb}{\operatorname{Hilb}}
\newcommand{\Stab}{\operatorname{Stab}}
\newcommand{\Hom}{\operatorname{Hom}}
\newcommand{\Sp}{\mathsf{Sp}}
\newcommand{\SL}{\mathsf{SL}}
\newcommand{\GL}{\mathsf{GL}}
\newcommand{\Spec}{\operatorname{\mathsf{Spec}}}
\newcommand{\Cs}{\C^{\times}}
\newcommand{\ResH}{\mathsf{Y}_H}
\newcommand{\QuotH}{\mathsf{X}_H}
\newcommand{\ds}{\dots}
\newcommand{\Irr}{\mathrm{Irr}}
\newcommand{\biDih}{\mathsf{D}}
\newcommand{\Dih}{\mathbf{D}}
\newcommand{\cyc}{\mathsf{C}}
\newcommand{\tetra}{\mathsf{T}}
\newcommand{\octa}{\mathsf{O}}
\newcommand{\isohedral}{\mathsf{I}}
\renewcommand{\o}{\otimes}
\newcommand{\bi}{\mathbf{i}}
\newcommand{\bj}{\mathbf{j}}
\newcommand{\bk}{\mathbf{k}}
\newcommand{\bzeta}{\boldsymbol{\zeta}}
\newcommand{\bomega}{\boldsymbol{\omega}}
\newcommand{\Quat}{\mathbf{H}}
\newcommand{\reg}{\mathrm{reg}}
\newcommand{\Fix}{\mathrm{Fix}}
\begin{document}

\title{On the (non)existence of symplectic resolutions for imprimitive symplectic reflection groups} 

%\date{2012}

\author{Gwyn Bellamy}

\address{School of Mathematics and Statistics, University Gardens, University of Glasgow, Glasgow, G12 8QW, UK}

\email{gwyn.bellamy@glasgow.ac.uk}

\author{Travis Schedler}

\address{U. Texas at Austin, Math Dept, RLM 8.100, 2515 Speedway Stop
  C1200, Austin, TX 78712, US}

\email{trasched@gmail.com}

\subjclass[2010]{16S80, 17B63}

\keywords{symplectic resolution, symplectic smoothing, symplectic
  reflection algebra, Poisson variety, quotient singularity, McKay correspondence}

\begin{abstract}
  We study the existence of symplectic resolutions of quotient
  singularities $V/G$ where $V$ is a symplectic vector space and $G$
  acts symplectically.  Namely, we classify the symplectically
  irreducible and imprimitive groups, excluding those of the form $K
  \rtimes S_2$ where $K < \SL_2(\C)$,
  % symplectic reflection groups $G$,
  for which the corresponding quotient
  singularity admits a projective symplectic resolution.  As a
  consequence, for $\dim V \neq 4$, we classify all quotient
  singularities $V/G$ admitting a projective symplectic resolution
  which do not decompose as a product of smaller-dimensional quotient
  singularities, except for at most four explicit singularities, that occur in dimensions at most $10$, for whom the question of existence remains open.
\end{abstract}
\maketitle

%\tableofcontents

\section{Introduction}

Symplectic quotient singularities have been intensively studied over
the past decade, due to their rich geometric structures, as
illustrated by the symplectic McKay correspondence \cite{SympMcKay},
and their key role in the geometric representation theory of
symplectic reflection algebras. One of the key questions regarding
the geometry of symplectic quotient singularities is whether there
exist symplectic resolutions of the singularity. In this paper, we
classify all irreducible quotient singularities of dimension not equal
to four which admit a projective symplectic resolution, excluding four exceptional 
cases. The main step is to prove that, in dimension four, a large
class of singularities do not admit a resolution. 

More precisely, we classify all symplectically imprimitive and
irreducible symplectic reflection groups (excluding the groups $K
\rtimes S_2 < \Sp_4(\C)$ where $K < \SL_2(\C)$) whose corresponding
quotient singularity admits a projective symplectic resolution.  This
is an important step in an ongoing program to completely classify all
finite subgroups $G$ of $\mathsf{Sp}(V)$ such that $V/G$ admits a
symplectic resolution.
% In this paper, we
% show that a certain large class of these groups do not admit
% projective symplectic resolutions, except for the cases where a
% resolution is already known to exist.

In order to state our main result we introduce some notation. Let $(V,
\omega)$ be a symplectic vector space and $G \subset \Sp(V)$ a finite
group. We are interested in the singularities of the quotient
$V/G$. In particular, the quotient $V/G$ is said to admit a
(projective) \textit{symplectic resolution} if there exists a
(projective) resolution of singularities $\pi : X \rightarrow V /G$
such that $X$ is a symplectic manifold, see section \ref{sec:defn} for
the precise definition.  

The number of known examples of such symplectic quotient singularities
admitting symplectic resolutions is remarkably small: they are only products
of the following singularities:
%For $G < \Sp(V)$,
%define the rank of $G$ to be the dimension of $V$.
\begin{itemize}
\item The 
 infinite series $\C^{2n}/(K \wr S_n)$, where $K$ is a finite subgroup of
$SL_2(\C)$ \\ (here and below, $K \wr S_n := K^n \rtimes S_n$), and
\item Two exceptional quotients $\C^4/G$: the exceptional complex
  reflection group $G_4 < \GL_2(\C) < \Sp_4(\C)$ \cite{Singular,LS},
  and the group $Q_8 \times_{\mathbf{Z}/2} D_8 < \Sp_4(\C)$
  \cite{smoothsra}.
\end{itemize}
We will assume throughout that $V$ is a symplectically irreducible
representation of $G$, i.e., that $V$ does not admit a proper nonzero
symplectic vector subspace invariant under $G$.  As we will recall,
all quotients that admit a symplectic resolution are products of
singularities $V/G$ of this form.

As a consequence of our main result we prove:
\begin{theorem}\label{t:four}
  Let $(V,G)$ be symplectically irreducible. If $V/G$ admits a
  projective symplectic resolution and $\dim V \neq 4$ then $(V,G)
  \simeq (\C^{2n}, K \wr S_n)$ for $K < \SL_2(\C)$, unless possibly
  $(V,G)$ is one of four examples.
\end{theorem}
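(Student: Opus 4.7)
The strategy is to combine a necessary condition due to Verbitsky with Cohen's classification of symplectic reflection groups, then dispatch each resulting family using a mix of the existing literature and the paper's own main theorem (which handles the imprimitive case).

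The first observation is that, by a theorem of Verbitsky, the existence of a projective symplectic resolution of $V/G$ forces $G$ to be generated by symplectic reflections. Hence $G$ falls under Cohen's classification of symplectically irreducible symplectic reflection groups, which partitions them into three families: (a) those where $V = W \oplus W^*$ and $G$ acts on $W$ as a complex reflection group; (b) complex-irreducible imprimitive symplectic reflection groups; (c) complex-irreducible primitive symplectic reflection groups.

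For family (a), work of Gordon together with its refinements by Bellamy (using the representation theory of the symplectic reflection algebras of Etingof-Ginzburg) shows that the only complex reflection groups whose doubling admits a projective symplectic resolution are the Shephard-Todd wreath products $G(m,1,n)$ and the exceptional rank-two group $G_4$. The former realise $(\C^{2n}, K \wr S_n)$ with $K < \SL_2(\C)$ cyclic, and the latter lives in dimension $4$, so is outside the hypothesis. For family (b), the main theorem of the present paper (stated for symplectically irreducible imprimitive symplectic reflection groups, excepting the groups $K \rtimes S_2 < \Sp_4(\C)$ which only occur in dimension $4$) asserts that, modulo that $4$-dimensional exception, the only such groups yielding a resolution are again the wreath products $K \wr S_n$. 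Combining (a) and (b) reproduces the infinite series of the theorem.

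The main obstacle is family (c). Here Cohen's list of complex-irreducible primitive symplectic reflection groups is finite and explicit, organized by $\dim V$; for $\dim V\neq 4$ it reduces to a short table in dimensions $6$, $8$, and $10$. For each such group the plan is to apply the available (non)existence criteria: most importantly, counting conjugacy classes of symplectic reflections and comparing with the second Betti number of any would-be resolution (via Namikawa's Poisson deformation theory and the link to the centre of the symplectic reflection algebra provided by Ginzburg-Kaledin and by Bellamy), and, where this is inconclusive, direct obstruction arguments on partial resolutions of $V/G$. These techniques rule out all but at most four primitive groups in dimensions between $6$ and $10$, which are exactly the exceptional cases whose existence question is left open in the statement.
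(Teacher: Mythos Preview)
Your overall architecture is sound and matches the paper: reduce to symplectic reflection groups via Verbitsky, split into improper and proper using Cohen, dispatch the improper case via \cite{Singular} (only $G(m,1,n)$ and $G_4$ survive, the latter four-dimensional), and dispatch the symplectically imprimitive proper case via Theorem~\ref{thm:main} (the excluded groups $K\rtimes S_2$ live in dimension four and so are irrelevant here). The case $\dim V=2$ is implicitly handled since $K\wr S_1=K$, though you should say a word about it.

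The genuine gap is your treatment of the primitive groups in family (c). You propose to eliminate three of the seven groups from Cohen's Table~III by ``counting conjugacy classes of symplectic reflections and comparing with the second Betti number'' together with unspecified ``direct obstruction arguments on partial resolutions''. This is not a proof: you do not carry out the count, you do not verify that the Betti-number comparison actually obstructs a resolution for any of the seven groups, and the phrase ``direct obstruction arguments'' has no content. In fact, the Poisson-deformation/Namikawa route you sketch runs in the opposite direction from what the paper does here---the paper \emph{deduces} singularity of $\operatorname{Spec}\mathsf{Z}_{\mathbf c}(G)$ from nonexistence of resolutions, not the other way round, precisely because the representation-theoretic verification is hard for these exceptional groups.

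The paper's actual argument for (c) is both concrete and different in kind. Cohen's Table~III records, for each of the seven primitive groups $Q,R,S_1,S_2,S_3,T,U$ in dimension $\geq 6$, the stabilizer $H<\Sp_{2n-2}(\C)$ of a generic vector in the image of $g-\Id$ for a symplectic reflection $g$. Kaledin's reduction theorem \cite[Theorem~1.6]{KaledinDynkin} then says: if $\C^{2n}/G$ admits a projective symplectic resolution, so does $\C^{2n-2}/H$. For type $Q$ one finds $H=G(\cyc_4,\cyc_2,1)$, eliminated by Theorem~\ref{thm:main}; for type $S_3$ one finds $H=G_3(\biDih_2,\cyc_2)$, eliminated by Lemma~\ref{lem:specialthreegroup}; for type $T$ the stabilizer is the Coxeter group $H_3$ acting on $\mathfrak h\oplus\mathfrak h^*$, eliminated by \cite{Singular}. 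This leaves exactly $R,S_1,S_2,U$, the four open cases. Replace your vague paragraph on (c) with this stabilizer-reduction argument.
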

The four examples referred to above, whose corresponding reflection
representation $V$ has dimension at most 10, will be clarified below,
and in these cases we do not resolve the question of whether $V/G$
admits a projective symplectic resolution.

To state our main theorem, we introduce some more definitions. Recall
that an element $g \in G$ is said to be a \emph{symplectic reflection}
if $\mathrm{rk}( 1 - g) = 2$. The group $G$ (or rather the triple
$(V,\omega,G)$) is said to be a symplectic reflection group if $G$ is
generated by the symplectic reflections that it contains. By
\cite{Verbitsky}, if $V/G$ admits a projective symplectic resolution,
then $G$ is a symplectic reflection group.

If $V$ were not symplectically irreducible, then $V$ would decompose
as $V= V_1 \oplus V_2$ where $V_i$ are symplectic representations of
$G$. When $G$ is a symplectic reflection group as above, then $G$ must
decompose as $G = G_1 \times G_2$ where $G_i < \Sp(V_i)$ is generated
by symplectic reflections in $\Sp(V_i)$; hence $V/G = V_1/G_1 \times
V_2/G_2$.  Therefore the classification of quotients $V/G$ admitting
symplectic resolutions reduces to the case where $V$ is symplectically
irreducible.

A symplectic representation $V$ of $G$ is said to be symplectically
imprimitive if there exists a nontrivial decomposition $V = V_1 \oplus
\cdots \oplus V_k$ into symplectic subspaces such that, for all $i$
and all $g \in G$, there exists $j$ such that $g(V_i) = V_j$.  We call
a group $G < \Sp(V)$ symplectically irreducible or imprimitive if $V$
is such as a representation of $G$.

As above, let $K$ be a finite subgroup of $SL_2(\C)$ (whose
classification is very well known; see \S \ref{sec:Kleinian}
below). The wreath product $K \wr S_n$ acts as a symplectic reflection
group on $\C^{2n}$. By \cite[Theorem 2.2 and 2.9]{CohenQuaternionic},
the symplectically imprimitive and irreducible symplectic reflection
groups are all realized as subgroups (normal when $n > 2$) of $K \wr
S_n$ for suitable $K$ and $n$.

When $n=2$, we will exclude the subgroups $K \rtimes S_2 <
(K \times K) \rtimes S_2$, where $K \hookrightarrow (K \times K)$ is
given by $k \mapsto (k, \alpha(k))$ for some involution $\alpha: K \to
K$.  Our main result reads:
\begin{theorem}\label{thm:main}
  Let $G < \Sp_{2n}(\C)$ be symplectically imprimitive and
  irreducible.  Assume that either $n > 2$ or that $G$ is not of the
  form $K \rtimes S_2$ as above with $K < \SL_2(\C)$. Then the
  symplectic quotient $\C^{2n} / G$
%$G_n(K,H,\alpha)$ 
  admits a projective symplectic resolution if and only if $G$ is isomorphic to either $K \wr S_n$
  or $Q_8 \times_{\mathbf{Z}/2} D_8$, which is the group from
  \cite{smoothsra} (for which $n=2$).
%  only if $K = H$ or $G = G_2(\cyc_{2},\biDih_2,\mathrm{id})$.
\end{theorem}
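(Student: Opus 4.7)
The plan is to leverage Cohen's classification of symplectically imprimitive irreducible symplectic reflection groups, already invoked above, to reduce to an explicit finite list of candidates, and then on that list to separate the cases that do admit a projective symplectic resolution from those that do not. By \cite[Theorem 2.2 and 2.9]{CohenQuaternionic} every candidate $G$ sits in $K \wr S_n$ for some finite $K < \SL_2(\C)$, and for $n > 2$ it is normal there. The ``if'' direction is already known: $\C^{2n}/(K \wr S_n)$ is resolved by the Hilbert scheme of $n$ points on the minimal resolution of $\C^2/K$, and a resolution for $Q_8 \times_{\mathbf{Z}/2} D_8$ is given in \cite{smoothsra}. The body of the proof is therefore the ``only if'' direction, namely showing that no other candidate on Cohen's list admits a projective symplectic resolution.

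For the non-existence part I would use two main tools. First, the slice theorem: if $V/G$ admits a projective symplectic resolution then so does the slice quotient $W/H$ for every $v \in V$, where $H = \Stab_G(v)$ and $W$ is the symplectic complement of $V^H$. This permits an inductive reduction to stabilizer (``parabolic'') subgroups acting on lower-dimensional symplectic spaces. Second, the Namikawa/Ginzburg--Kaledin obstruction theory: if $X \to V/G$ is a projective symplectic resolution, the number of conjugacy classes of symplectic reflections in $G$ equals $\dim H^2(X,\C)$ and in turn controls the dimension of the Poisson deformation space of $V/G$; comparing this count against the ``local'' degenerations of $V/G$ along its codimension-two symplectic leaves yields numerical obstructions that can be checked group-by-group.

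I would then treat $n > 2$ and $n = 2$ separately. For $n > 2$, every normal subgroup of $K \wr S_n$ on Cohen's list other than $K \wr S_n$ itself contains a stabilizer $H$ whose slice representation falls into the $n = 2$ setting (or into a smaller $n$), so slice-theorem induction reduces the non-existence claim to the rank-two case once that case is settled. For $n = 2$ I would enumerate the symplectic reflection subgroups of $(K \times K) \rtimes S_2$ across the Kleinian groups $K$, excluding the $K \rtimes S_2$ family per hypothesis, and for each one other than $K \wr S_2$ and $Q_8 \times_{\mathbf{Z}/2} D_8$ either apply the Namikawa count directly or pass to a further stabilizer where the count fails. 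The positive examples are those for which the count, the transverse slice geometry, and Verbitsky's constraint simultaneously permit a resolution, which I would verify is consistent with the constructions cited above.

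The main obstacle is precisely this rank-two analysis. The lattice of symplectic reflection subgroups of $(K \times K) \rtimes S_2$ is intricate, and while the conjugacy-class count suffices to exclude many candidates, some have the ``expected'' number of symplectic reflection classes and require a finer argument, typically an analysis of the transverse slices at the generic points of the codimension-two singular strata, to exhibit an obstruction that no global resolution can absorb. The family $K \rtimes S_2$ is set aside because its obstruction theory is essentially different from the wreath-product-based numerics used here and must be treated in separate work.
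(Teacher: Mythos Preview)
Your overall architecture matches the paper's: reduce via Cohen's classification, handle the ``if'' direction by citation, and for the ``only if'' direction reduce $n>2$ to $n=2$ by passing to stabilizers (the paper does exactly this in Theorem~\ref{t:main-g4}, using \cite[Theorem~1.6]{KaledinDynkin}). Where you diverge is the engine for the $n=2$ analysis, and this is where there is a real gap.

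You propose to rule out the non-wreath groups $G(K,H,\alpha)$ by a Namikawa/Ginzburg--Kaledin numerical comparison: match the number of conjugacy classes of symplectic reflections against $\dim H^2$ of a putative resolution, and look for a mismatch along codimension-two strata. But Namikawa's theory guarantees that this count \emph{always} agrees with $\dim H^2$ of any $\Q$-factorial terminalization, so the bare count does not separate the smooth terminalizations from the singular ones. You acknowledge this implicitly (``some have the expected number \ldots and require a finer argument''), but the finer argument is never specified, and without it the proposal does not actually exclude any of the groups in Tables~\ref{tab:table1}--\ref{tab:table2}. In fact the paper remarks after Corollary~1.3 that the usual route---showing $\Spec \mathsf Z_{\mathbf c}(G)$ is singular for all $\mathbf c$ via representation theory---is the \emph{opposite} of what is done here, suggesting the authors did not find that approach tractable for this family.

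The paper's actual mechanism is geometric and is the idea your proposal is missing. One passes to the partial crepant resolution $\ResH \times \ResH \to (\C^2/H)^2$, on which the quotient $Q = G/H^2$ acts, and then locates \emph{isolated} fixed points $p$ for $Q$ whose stabilizer $Q_p$ fails to be a symplectic reflection group on $T_p(\ResH \times \ResH)$ (Definition~\ref{defn:singular}, Theorem~\ref{thm:singularimpliesnores}). By Verbitsky's theorem such a tangent quotient has no symplectic resolution, and by Proposition~\ref{prop:contradiction} (which rests on \cite{Namikawa} and \cite{KaledinDynkin}) this obstructs any projective symplectic resolution of $\C^4/G$. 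The exceptional components of $\ResH$ are controlled via the McKay correspondence, so the $\Gamma = K/H$-action on them is read off from the $\Gamma$-action on $\Irr(H)$; this is what makes the case-by-case search in \S6 feasible. For the residual cases with $H=\cyc_2$ a separate argument on $T^*\mathbf P^1$ (Proposition~\ref{prop:singc2}) is used. None of this is visible in your plan, and it is precisely what does the work.
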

In more detail, by \cite[Theorem 2.2 and 2.9]{CohenQuaternionic}, the
symplectically imprimitive and irreducible symplectic reflection
groups $G < \Sp_{2n}(\C)$ are, up to conjugation, all of the form:
%$G < S_n \wr K$ are,
%up to conjugation:
\begin{itemize}
\item For $n=2$, the group $G=G(K,H,\alpha)$, where $H < K$ is a normal
  subgroup, $\alpha$ is an involution of $K/H$, and $G(K,H,\alpha) <
  K \wr S_2$ is the subgroup generated by $S_2$, $H^2$, and the cosets
  $(kH, \alpha(kH)) < K^2/H^2$ for all $k \in K$;
%(moreover, $K, H$, and
%  $\alpha$ must be one of the cases listed in Tables \ref{tab:table1} and
% \ref{tab:table2} below);
\item For $n \geq 3$, the group $G=G_n(K,H)$, where $H < K$ is a
  subgroup containing the commutator subgroup $[K,K]$, and $G_n(K,H) <
  K \wr S_n$ is generated by $S_n$, $H^n$, and the cosets $(k_1 H,
  \ldots, k_n H) < K^n/H^n$ for all $k_1, \ldots, k_n \in K$ such that
  $k_1 \cdots k_n \in H$.
\end{itemize}
Using results of Kaledin, we will reduce the theorem to the case
%it suffices to consider the case where
$n = 2$, together with the single case
$G_3(\biDih_2,\cyc_2)$.  The condition that $G(K,H,\alpha)$ is not
the group $K \rtimes S_2$ is precisely saying that $H$ is
nontrivial.  Therefore the main step of the proof is to show that the
groups $G := G(K,H,\alpha)$ do not admit projective symplectic resolutions when
$H \neq \{1\}$.  Let $\ResH$ denote the minimal resolution of $\C^2 /
H$. The key to proving Theorem \ref{thm:main} is to study the action
of the quotient $G / H^2$ on $\ResH \times \ResH$. In particular, we
show that the symplectic variety $(\ResH \times \ResH) / (G / H^2)$
does not, in general, admit a projective symplectic resolution.

\subsection{} The existence of projective symplectic resolutions of the quotient
singularity $V /G$ is known to be equivalent to the existence of a
smooth Poisson deformation of $V/G$; that is, a flat, affine Poisson
deformation of $V/G$ whose generic fibre is a smooth Poisson
variety. Let $\mathsf{H}_{\mathbf{c}}(G)$ be the symplectic reflection
algebra at $t = 0$ associated to $G$ as defined in \cite{EG}. The
centre of this algebra is denoted $\mathsf{Z}_{\mathbf{c}}(G)$. When
the parameter $\mathbf{c}$ is zero, $\mathsf{Z}_{\mathbf{c}}(G)$ is
the coordinate ring of $V/G$. It is known, by \cite{GK}, that
$\mathsf{Z}_{\mathbf{c}}(G)$ defines a flat Poisson deformation of
$V/G$. As noted in \cite[Theorem 1.2.1]{smoothsra}, results of
Ginzburg-Kaledin and Namikawa imply that:

\begin{corollary}
  Let $G$ be a symplectically imprimitive symplectic reflection group obeying the
assumption of Theorem
  \ref{thm:main}.
%with $H \neq 1$ if $n = 2$. 
  Then the variety $\Spec \mathsf{Z}_{\mathbf{c}}(G)$ is singular for
  all parameters $\mathbf{c}$ unless $G$ is isomorphic to $K \wr S_n$ or $Q_8
  \times_{\mathbf{Z}/2} D_8$.
\end{corollary}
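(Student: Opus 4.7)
The plan is to deduce this corollary directly from Theorem~\ref{thm:main} by invoking the standard equivalence between the existence of a projective symplectic resolution of $V/G$ and the smoothness of some fibre of the family $\Spec \mathsf{Z}_{\mathbf{c}}(G) \to \Spec \C[\mathbf{c}]$, exactly as recorded in \cite[Theorem~1.2.1]{smoothsra}.

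First I would recall the deformation-theoretic equivalence hinted at in the paragraph preceding the corollary. By a result of Ginzburg--Kaledin, $V/G$ admits a projective symplectic resolution if and only if it admits a smooth affine Poisson deformation. By Namikawa's theory of Poisson deformations of symplectic singularities, every such deformation is, up to a finite base change on the parameter space, pulled back from the family $\Spec \mathsf{Z}_{\mathbf{c}}(G) \to \Spec \C[\mathbf{c}]$ furnished by the centre of the symplectic reflection algebra. In particular, the existence of a projective symplectic resolution of $V/G$ is equivalent to the existence of some parameter $\mathbf{c}$ for which $\Spec \mathsf{Z}_{\mathbf{c}}(G)$ is smooth.

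Second, I would apply Theorem~\ref{thm:main}: under the hypotheses of the corollary, if $G$ is not isomorphic to either $K \wr S_n$ or $Q_8 \times_{\mathbf{Z}/2} D_8$, then $V/G$ admits no projective symplectic resolution. Taking the contrapositive through the equivalence above yields that $\Spec \mathsf{Z}_{\mathbf{c}}(G)$ must be singular for every $\mathbf{c}$, which is precisely the claimed conclusion.

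All the geometric difficulty of this corollary is therefore absorbed into Theorem~\ref{thm:main} (whose proof, as indicated in the introduction, proceeds by analysing the action of $G/H^2$ on $\ResH \times \ResH$); no additional obstacle arises in the deduction itself, which is a formal consequence of the Ginzburg--Kaledin--Namikawa dictionary.
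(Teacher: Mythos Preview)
Your proposal is correct and matches the paper's approach exactly: the paper does not give a separate proof of this corollary but simply remarks that it follows from Theorem~\ref{thm:main} via the Ginzburg--Kaledin--Namikawa equivalence recorded in \cite[Theorem~1.2.1]{smoothsra}, which is precisely what you invoke.
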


Usually one uses the representation theory of symplectic reflection
algebras to show that the variety $\Spec \mathsf{Z}_{\mathbf{c}}(G)$
is singular for all parameters and hence deduce that the corresponding
symplectic quotient singularity does not admit a projective symplectic
resolution. We have taken the opposite approach in this paper.

\subsection{} The paper is structured as follows. In section two we
recall the definition of symplectic variety and symplectic
resolutions. Using work of Namikawa and Kaledin we give two general
criteria for the non-existence of projective symplectic resolutions of $V/G$. In
section three, in order to fix notation, we recall the Kleinian
groups. Cohen's classification of symplectic reflection groups
is recalled in section four.

In section five we consider more specific criteria that can be used to
prove the non-existence of projective symplectic resolutions of $V/G$ when $G$ is
symplectically imprimitive and $V = \C^4$. Then, in section six, we
work through the list of such groups, showing case-by-case that they
do not posses projective symplectic resolutions. In section seven, we deduce the
main result for $\dim V \geq 6$ from these cases and one additional
case, in Lemma \ref{lem:specialthreegroup}. In section eight, we
summarize the resulting proof of Theorem \ref{thm:main}, and deduce
Theorem \ref{t:four} from this.  Finally, in section nine, we
list some open questions (``exercises for the interested reader'').

\subsection{Acknowledgments}

The first author is supported by the EPSRC grant EP-H028153. The
authors would like to thank Y. Namikawa for patiently answering our
questions. The second author was supported by an AIM five-year
fellowship and by NSF grant DMS-0900233.  This material is based upon
work supported by the NSF under Grant No. 0932078 000, while the
authors were in residence at the Mathematical Science Research
Institute (MSRI) in Berkeley, California, during 2013.

\section{Symplectic varieties and symplectic resolutions}

\subsection{} In this section we recall the definition of a symplectic
variety and of symplectic resolutions. We give some criteria for the
(non-)existence of projective symplectic resolutions. The definition of
symplectic variety was introduced by Beauville in the seminal paper
\cite{Beauville}.

\begin{definition}
  Let $X$ be an affine variety. Then $X$ is said to be a
  \textit{symplectic variety} if
\begin{enumerate}
\item $X$ is normal. 
\item There exists a symplectic form $\omega$ on the smooth locus
  $X_{\mathrm{sm}}$ of $X$.
\item There exists a resolution of singularities $\pi : Y \rightarrow
  X$ such that $\pi^* \omega$ extends to a regular $2$-form on $Y$.
\end{enumerate}
\end{definition}

One says that $X$ admits a symplectic resolution if there is a
resolution of singularities $\pi : Y \rightarrow X$ such that $\pi^*
\omega$ extends to a \textit{non-degenerate} $2$-form on $Y$.

\subsection{Conic symplectic varieties} 

Let $X$ be an affine symplectic variety. Then $X$ is said to be
equipped with a good $\Cs$-action if there is an algebraic action of
$\Cs$ on $X$ such that
\begin{enumerate}
\item The weights of $\Cs$ on $X$ are positive and there exists a
  unique fixed point $0 \in X$.
\item The symplectic form $\omega$ has positive weight $l > 0$.
\end{enumerate}

Let $X$ and $Y$ be normal, quasi-projective varieties over $\C$ and
$\Omega_X$, resp. $\Omega_Y$ the corresponding canonical sheaves. A
morphism $\pi : Y \rightarrow X$ is said to be \textit{crepant} if
$\pi^* \Omega_X \simeq \Omega_Y$. If $X$ is a symplectic variety, $Y$
a smooth variety and $\pi : Y \rightarrow X$ a crepant, proper
birational morphism, then $\pi$ is a symplectic resolution. The
composition of two crepant morphisms is again crepant. The following
is a direct consequence of the results of \cite{Namikawa}.

\begin{theorem}\label{thm:abstractexistence}
  Let $X$ be an affine symplectic variety equipped with a good
  $\Cs$-action. Let $\pi : Y \rightarrow X$ be a crepant, projective,
  birational morphism and let $U \subset Y$ be an affine open
  subset. If $X$ admits a projective symplectic resolution then $U$
  admits a projective symplectic resolution.
\end{theorem}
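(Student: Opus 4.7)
The plan is to lift $\pi : Y \to X$ to a full projective symplectic resolution $\tilde\pi : \tilde Y \to Y$; the restriction $\tilde\pi|_{\tilde\pi^{-1}(U)} : \tilde\pi^{-1}(U) \to U$ is then automatically a projective symplectic resolution of $U$, because projectivity is preserved under base change, $\tilde\pi^{-1}(U)$ inherits smoothness and the symplectic form as an open subvariety of $\tilde Y$, and birationality is a local condition on the base.

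To construct $\tilde\pi$, I would choose a $\mathbb{Q}$-factorial terminalization $\tilde\pi : \tilde Y \to Y$: a projective, birational, crepant morphism from a $\mathbb{Q}$-factorial symplectic variety $\tilde Y$ with only terminal singularities. The existence of such a terminalization in this setting is due to Namikawa. Since crepant morphisms compose to crepant morphisms, as recalled just before the statement of the theorem, the composite $\pi \circ \tilde\pi : \tilde Y \to X$ is again projective, birational, and crepant, and $\tilde Y$ is still $\mathbb{Q}$-factorial with terminal singularities; hence $\pi \circ \tilde\pi$ is itself a $\mathbb{Q}$-factorial terminalization of $X$.

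Now I invoke Namikawa's key result: for a conical affine symplectic variety, any two $\mathbb{Q}$-factorial terminalizations are connected by a finite chain of Mukai flops and are therefore simultaneously smooth. A projective symplectic resolution is itself a $\mathbb{Q}$-factorial terminalization, so the hypothesis that $X$ admits a projective symplectic resolution forces every $\mathbb{Q}$-factorial terminalization of $X$ to be smooth. Applied to $\pi \circ \tilde\pi$, this shows that $\tilde Y$ is smooth, so $\tilde\pi$ is the desired projective symplectic resolution of $Y$, and the conclusion for $U$ follows by the restriction argument above.

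The main obstacle is carefully quoting the necessary results of Namikawa in the relative setting: we need both the existence of a $\mathbb{Q}$-factorial terminalization of $Y$ (which is only quasi-projective over $X$ and may not inherit a compatible good $\mathbb{C}^\times$-action) and the flop-connectivity/simultaneous-smoothness statement for $\mathbb{Q}$-factorial terminalizations of the conical $X$. Once those inputs are in place, the argument is essentially formal.
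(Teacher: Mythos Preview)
Your proposal is correct and follows essentially the same route as the paper: take a $\mathbb{Q}$-factorial terminalization of $Y$, compose with $\pi$, invoke Namikawa's result that every $\mathbb{Q}$-factorial terminalization of a conical symplectic variety admitting a projective symplectic resolution is smooth, and then restrict over $U$. The only discrepancies are attributional: the paper cites \cite{BCHM} (the minimal model program) rather than Namikawa for the existence of the terminalization over $Y$, and it appeals to \cite[Proposition~1.6]{FuSurvey} to check that the restriction to $\rho^{-1}(U)$ is again a projective symplectic resolution.
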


\begin{proof}
  By the proof of \cite[Theorem 5.5]{Namikawa}, since $X$ admits a
  projective symplectic resolution and has a good $\Cs$-action, every
  crepant, projective, birational morphism $\pi_0 : Z_0 \rightarrow X$
  from a space $Z_0$ having at worst $\Q$-factorial terminal
  singularities is necessarily a symplectic resolution. By
  \cite{BCHM}, the minimal model program implies that there exists
  some crepant, projective, birational morphism $\rho : Z \rightarrow
  Y$ such that $Z$ has only $\Q$-factorial terminal
  singularities. Therefore $\pi \circ \rho : Z \rightarrow X$ must be
  a projective symplectic resolution, by the first observation.  In
  particular, $Z$ is a symplectic manifold. The restriction
  $\rho|_{\rho^{-1}(U)} : \rho^{-1}(U) \rightarrow U$ is a resolution
  of singularities. Since $\rho^{-1}(U)$ is a symplectic manifold,
  \cite[Proposition 1.6]{FuSurvey} implies that $\rho|_{\rho^{-1}(U)}$
  is a projective symplectic resolution of $U$.
\end{proof}

\subsection{} The case that will be of interest to us is the
following. Let $(V,\omega,G)$ be a symplectic reflection group acting
on a symplectic vector space $V$ and assume that we are given a normal
subgroup $H$ of $G$ such that
\begin{itemize}
\item $H$ acts on $V$ as a symplectic reflection group.
\item There exists a projective symplectic resolution $\pi : X
  \rightarrow V/H$.
\item The action of $Q := G/H$ lifts to an action on $X$ making $\pi$
  a $G/H$-equivariant morphism.
\end{itemize}
To each $x \in X$, we associate the pair $(Q_x,T_x X)$, where $Q_x$ is
the stabilizer of $x$ in $Q$ and $T_x X$ is the tangent space of $X$
at $x$. Note that $T_x X$ is a symplectic representation of
$Q_x$. This representation is faithful. To see this we note that the
fact that $\pi$ is equivariant implies that $Q$ acts freely on some
dense open subset of $X$. On the other hand, if there is some $g \in
Q_x$ acting trivially on $T_x X$, then this implies that $\dim
\Fix_X(g) = \dim X$ and hence $g = 1$.

\begin{proposition}\label{prop:contradiction}
  If the quotient $V/G$ admits a projective symplectic resolution then
  $T_x X/ Q_x$ admits a projective symplectic resolution for all $x
  \in X$.
\end{proposition}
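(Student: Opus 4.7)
The plan is to realize $T_x X / Q_x$ as an \'etale-local model of $X/Q$ near the image $\bar x$ of $x$, and to invoke Theorem~\ref{thm:abstractexistence} applied to the crepant cover $\bar\pi : X/Q \to V/G$. First I would verify that $X/Q$ is an affine symplectic variety: since $G$ preserves $\omega$, the group $Q = G/H$ acts on $V/H$ by Poisson automorphisms, and the unique $Q$-equivariant lift of this action to the symplectic resolution $\pi : X \to V/H$ is by symplectomorphisms, so $X/Q$ is symplectic in the sense of Beauville. The induced morphism $\bar\pi : X/Q \to V/G$ is projective, birational (an isomorphism on smooth loci), and crepant, since the symplectic forms on the smooth loci of $X/Q$ and $V/G$ are identified under $\bar\pi$.

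Next I would apply Theorem~\ref{thm:abstractexistence}. The scaling $\Cs$-action on $V$ descends to a good $\Cs$-action on $V/G$ (with $\omega$ of positive weight). Under the hypothesis that $V/G$ admits a projective symplectic resolution, Theorem~\ref{thm:abstractexistence} applied to $\bar\pi$ yields that every affine open subset $U \subset X/Q$ admits a projective symplectic resolution.

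The main geometric input is Luna's \'etale slice theorem at $x \in X$, which produces a $Q_x$-invariant affine open neighborhood $S \subset X$ of $x$, together with a $Q_x$-equivariant \'etale morphism $\phi : S \to T_x X$ sending $x$ to $0$, such that $S/Q_x \hookrightarrow X/Q$ is an open immersion and the induced map $\bar\phi : S/Q_x \to T_x X / Q_x$ is \'etale, identifying the formal completions at $\bar x$ and $0$. Existence of such a slice uses that $X$ is smooth at $x$ and that $Q$ is finite. By the previous paragraph, the open $S/Q_x \subset X/Q$ admits a projective symplectic resolution.

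The hard part will be transferring the existence of a projective symplectic resolution from $S/Q_x$ to $T_x X / Q_x$ itself. For this I would use that $T_x X / Q_x$ carries a good $\Cs$-action by scaling, with unique fixed point $0$ and $\omega$ of positive weight. The existence of a projective symplectic resolution of a conic affine symplectic variety is detected by its formal germ at the cone point, via algebraization using the $\Cs$-contraction (following Namikawa's work); since $S/Q_x$ and $T_x X / Q_x$ have canonically isomorphic formal completions at $\bar x$ and $0$, the projective symplectic resolution of the former transfers to one of the latter. This last formal-to-algebraic descent is the delicate step, whereas the earlier reductions are essentially bookkeeping around Theorem~\ref{thm:abstractexistence} and Luna's theorem.
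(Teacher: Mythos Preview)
Your approach is essentially the same as the paper's: both pass to a formal neighbourhood of $\bar x$ in $X/Q$, identify it with $\widehat{T_x X}/Q_x$, produce a smooth formal crepant resolution there, and then algebraize. The paper does this directly---running the MMP to get a crepant projective $\rho:Y\to X/Q$ with $\Q$-factorial terminal singularities, invoking the argument of Theorem~\ref{thm:abstractexistence} to see $Y$ is smooth, completing $\rho$ along $\rho^{-1}(\bar x)$, and then applying \cite[Theorem~1.4]{KaledinDynkin} for the algebraization. Your route through Theorem~\ref{thm:abstractexistence} and Luna's slice is a minor repackaging of the same steps.

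Two small corrections. First, $X/Q$ is \emph{not} affine (the resolution $X\to V/H$ is projective but not affine once $H\neq 1$), so you cannot call it a symplectic variety in the paper's sense; fortunately you do not need this, since Theorem~\ref{thm:abstractexistence} only asks that the open $U\subset X/Q$ be affine, and your Luna slice $S/Q_x$ is. Second, the ``delicate'' formal-to-algebraic descent you allude to is exactly \cite[Theorem~1.4]{KaledinDynkin}: a smooth formal projective crepant resolution of $\widehat{T_x X}/Q_x$ algebraizes to one of $T_x X/Q_x$. The paper cites this result explicitly; you should too rather than gesturing at Namikawa.
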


\begin{proof}
  As in Theorem \ref{thm:abstractexistence}, we may choose some
  crepant projective morphism $\rho : Y \rightarrow X/Q$ such that $Y$
  has at worst $\Q$-factorial, terminal singularities. As shown there,
  the fact that there exists some projective symplectic resolution of
  $V/G$ implies that $Y$ is smooth. Choose $x \in X$ and let $\bar{x}$
  denote its image in $X/Q$. Write $\widehat{Y}_x$ for the completion
  of $Y$ along the closed sub-scheme $\rho^{-1}(\bar{x})$. The
  completion of $X/Q$ at $\bar{x}$ is isomorphic to the quotient of
  the completion $\widehat{T_x X}$ by $Q_x$. Then $\rho$ induces a
  projective morphism $\widehat{\rho} : \widehat{Y}_x \rightarrow
  \widehat{T_x X}/Q_x$. Since $\widehat{Y}_x$ is smooth with trivial
  canonical bundle, $\widehat{\rho}$ is a projective smooth formal
  crepant resolution in the sense of \cite[\S
  1]{KaledinDynkin}. Therefore \cite[Theorem 1.4]{KaledinDynkin}
  implies that $T_x X / Q_x$ admits a projective symplectic
  resolution.
  % Now assume that every $(Q_x,T_x X)$ admits a projective symplectic
  % resolution. Let $y \in Y$. We need to show that $Y$ is smooth at
  % $y$. Set $\bar{x} = \rho(y)$ and choose some lift $x$ of
  % $\bar{x}$. The point $y$ will be smooth in $Y$ if and only if it
  % is smooth in $\widehat{Y}_x$, the completion of $Y$ along
  % $\rho^{-1}(\bar{x})$. Again, $\rho$ induces a projective morphism
  % $\widehat{\rho} : \widehat{Y}_x \rightarrow \widehat{T_x
  % X}/Q_x$. Since $(Q_x,T_x X)$ admits a projective symplectic
  % resolution it now follows (need to check this!) from the proof of
  % \cite[Theorem 5.5]{Namikawa} that $\widehat{Y}_x$ is smooth.
\end{proof}

\section{The Kleinian groups}\label{sec:Kleinian}

\subsection{} Let $\Quat = \R \oplus \R \bi \oplus \R \bj \oplus \R
\bk$ be the skew-field of quaternions. In order to fix notation, we
remark that the finite subgroups of $\GL(\mathbf{H}^1)$ up to
isomorphism are the cyclic group $\cyc_m = \langle \bzeta \ | \
\bzeta^m = 1 \rangle, \ \textrm{of order } m$, the binary dihedral
group $\biDih_m = \langle \cyc_{2m}, \bk \rangle, \ \textrm{of order }
4m$ and the three exceptional groups $\tetra = \langle \biDih_2,
\bomega \rangle$ of order $24$, where $\bomega = \frac{1}{2}(1 - \bi +
\bj + \bk)$, $\octa = \langle \tetra, \frac{1}{\sqrt{2}}(1 + \bi)
\rangle$ of order $48$ and
$$
\isohedral = \langle \biDih_2, \frac{1}{2}(\rho + \sigma \bi - \bj)
\rangle, \ \textrm{of order $120$, where $\rho = 2 \cos \left(
    \frac{\pi}{5} \right)$} \ \textrm{and $\sigma = 2 \cos \left(
    \frac{3 \pi}{5} \right)$.}
$$

\subsection{Complexification}
We consider $\C \subseteq \Quat$ to be the subfield $\C = \R \oplus \R
\cdot \bi$.  Given a finite subgroup $G$ of $\GL(\Quat^n)$, a choice
of complex structure on $\Quat$ realizes $G$ as a subgroup of
$\GL(\C^{2n})$ (we consider matrices acting on the \textit{right} of
$\Quat^n$). The standard choice of complex structure, as used in
\cite{CohenQuaternionic} is $\Quat = \C \oplus \C \bj$. However, in
order to use the results of \cite{ItoNakamuraHilbert}, we choose the
complexification $\Quat = \C \oplus \C \bk$. The procedure $G \subset
\GL(\mathbf{H}^n)$ goes to $G^\vee \subset \GL(\C^{2n})$ is called
\textit{complexification}. Noting that $\epsilon := e^{\frac{2 \pi
    i}{8}} = \frac{1 + i}{\sqrt{2}}$, complexification is uniquely
defined by
$$
\bi = \left(\begin{array}{cc}
i & 0 \\
0 & -i 
\end{array} \right), \quad \bj = \left(\begin{array}{cc}
0 & - i \\
-i & 0 
\end{array} \right), \quad \bk = \left(\begin{array}{cc}
0 & 1 \\
-1 & 0 
\end{array} \right).
$$
The complexification map defines an embedding $\GL(\Quat^n)
\hookrightarrow \Sp(\C^{2n})$, where $\Quat$ is viewed as a
two-dimensional symplectic complex vector space with the form $\langle
1, \bk \rangle = 1$.  As explained in \cite[\S 1]{CohenQuaternionic},
this induces an equivalence between finite subgroups of
$\GL(\Quat^n)$, up to conjugation, and finite subgroups of
$\Sp(\C^{2n})$, up to conjugation.

This explicitly realizes all the finite subgroups of $\GL(\Quat)$
above as subgroups of $\Sp_2(\C) = \SL_2(\C)$.  Moreover, this
explicitly realizes the subgroups of \cite{CohenQuaternionic} as
subgroups of $\Sp_{2n}(\C)$, since they are described there in terms
of quaternions.

\section{Cohen's classification of symplectic reflection groups}
\label{sec:Gdefn}

\subsection{} \label{ss:cohen} The irreducible symplectic reflection
groups were first classified by A. Cohen in
\cite{CohenQuaternionic}. We recall here the the outline of his
classification. (We remark that his results are stated in terms of
quaternionic reflection groups, but based on the results of \cite[\S
1]{CohenQuaternionic}, there is a bijective correspondence between
quaternionic and symplectic reflection groups preserving symplectic imprimitivity
and symplectic irreducibility.)

% An element $s \in \GL(\mathbf{H}^n)$ is said to be a
% quaternionic reflection if it fixes pointwise a quaternionic subspace
% of dimension $n-1$. Then a finite subgroup $G$ of $\GL(\mathbf{H}^n)$
% is said to be a \textit{quaternionic reflection groups} if it is
% generated by quaternionic reflections. The rank of $G \subset
% \GL(\mathbf{H}^n)$ is defined to be $n$. Via complexification, there
% is a natural correspondence between quaternionic reflection groups and
% symplectic reflection groups - this follows from \cite[Proposition
% 1.4]{CohenQuaternionic}. Under the identification $\mathbf{H} = \C
% \oplus \C \bk$, one has $\mathbf{H}^n = \C^n \oplus (\C^n)^*$.
% Moreover, this correspondence sends quaternionic imprimitivity to
% symplectic imprimitivity, by \cite[Proposition
% 1.2]{CohenQuaternionic}.  
% %It also sends quaternionic irreducibility to
% %symplectic irreducibility.

A symplectic reflection group $G < \Sp(V)$ is said to be
\textit{improper} if it preserves a Lagrangian subspace $L \subseteq
V$, so that $G < \GL(L)$ is actually a complex reflection group.
Complex reflection groups were classified by Chevalley, Shephard, and
Todd, and for these groups our main results are immediate consequences
of \cite{Singular}.  Thus from now on we assume $G$ is proper.

% of the complexification $G^{\vee}$ on $\C^n
% \oplus (\C^n)^*$ equals the natural diagonal action of $G'$. Otherwise
% $G$ is said to be \textit{proper}. For an improper quaternionic
% reflection group $G$, the group $G'$ is a complex reflection
% group. Since the classification of improper quaternionic reflection
% groups follows from the classification of complex reflection groups,
% they can be ignored.  Moreover, in the case of complex reflection
% groups, our main theorems are all immediate consequences of the main
% result of \cite{Singular}.

We further say that a symplectic reflection group is \emph{complex
  imprimitive} if it is imprimitive considered as a subgroup of
$\GL(\C^{2n})$, i.e., if there exists a decomposition $\C^{2n} = V_1
\oplus \cdots \oplus V_k$ into complex subspaces such that, for all $g
\in G$ and all $i$, there exists $j$ such that $g(V_i) = V_j$.  There
exist symplectically primitive symplectic reflection groups which are,
nonetheless, complex imprimitive.  Therefore there are three natural
classes to consider (assume $G$ is symplectically irreducible and proper):

\begin{enumerate}
\item The symplectically imprimitive 
  symplectic reflection groups.  These are the groups that we will
  consider in this paper. 
%%By convention this includes all groups 
%%  $G < \Sp_2(\C) = \SL_2(\C)$.  These are
%all quaternionic reflection
%  groups of rank one are imprimitive. They are 
%%  listed in section \ref{sec:Kleinian}. 
  By \cite[Theorem 2.2]{CohenQuaternionic},
  such subgroups of $\Sp_4(\C)$ are of the form $G(K,H,\alpha)$ 
 as defined in the introduction, and by
  \cite[Theorem 2.6]{CohenQuaternionic}, they must be listed in
  Tables
% The symplectically
%   imprimitive, symplectically irreducible
%   quaternionic reflection groups of rank two are listed in tables
  \ref{tab:table1} and \ref{tab:table2}. By \cite[Theorem
  2.9]{CohenQuaternionic}, such subgroups of $\Sp_{2n}(\C)$ for $n >
  2$ are of the form $G_n(K,H)$, as defined in the introduction. Conversely,
  all of these groups are symplectically imprimitive (and irreducible)
  symplectic reflection groups.
% The imprimitive irreducible
%   quaternionic reflection groups of rank greater than two are
%   described below.
\item The symplectically primitive symplectic reflection groups which
  are complex imprimitive.  These all lie in $\Sp_4(\C)$ and are
  classified in \cite[Lemma 3.3]{CohenQuaternionic}. The fact that
  those listed are all such groups follows from \cite[Theorem
  3.6]{CohenQuaternionic}.
\item The complex primitive symplectic reflection groups. In this
  case, \cite[Theorem 4.2]{CohenQuaternionic} say that the groups in
  this case are precisely those listed in \cite[Table
  III]{CohenQuaternionic}. Thus there are only thirteen such groups
  and they occur in dimension at most ten.
\end{enumerate}
% We will say that a symplectic reflection group is imprimitive if it is
% the complexification of an imprimitive quaternionic reflection
% group.
It is interesting to note that the three exceptional groups
$P_1,P_2$, and $P_3$ that occur in (3) above are all extensions of
some group by the symplectic reflection group $Q_8
\times_{\mathbf{Z}/2} D_8$ studied in \cite{smoothsra}.

\subsection{Imprimitive reflection groups}\label{sec:defn}

We denote by $K$ a finite subgroup of $SL_2(\C)$ and $H$ a normal
subgroup of $K$.  The Kleinian singularity $\C^2 / H$ is denoted $\QuotH$ and
the corresponding minimal resolution is $\pi : \ResH \rightarrow
\QuotH$. 

We let $\alpha$ be an involution of $\Gamma:= K/H$.
%will denote an automorphism of order at
%most two of the quotient $\Gamma := K/H$. 
We choose coordinates on $\C^2$ so that the ring of polynomial
functions on $\C^2$ is $\C[x,y]$. We also endow $\C^2$ with the
standard symplectic form so that $\{ x, y \} = 1$, where $\{ - , - \}$
is the corresponding Poisson bracket. Associated to $H,K,\alpha$ is
the symplectically imprimitive and irreducible group $G :=
G(K,H,\alpha)$, acting on $\C^4 = \C^2 \times \C^2$, which we defined
in the introduction as a subgroup of $G < K \wr S_2$.  In \S
\ref{sec:Gdefn} below, we recall the action of $G$ and $K \wr S_2$ on
$\C^4$.

Let $Q$ be the quotient $Q := G / H^2 = (K / H) \rtimes_{\alpha} S_2$
of $G$.

If $T$ is a group, $\rho$ a representation of $T$ and $g$ an
automorphism of $T$ then the twist of $\rho$ by $g$ is denoted ${}^g
\rho$. As a vector space, ${}^g \rho = \rho$ and $t \cdot m := g(t) m$
for $t \in T$ and $m \in {}^g \rho$. 

By a variety, we mean a reduced and irreducible scheme of finite type
over $\C$. % The \textit{rank} of a symplectic reflection group $(V,G)$
% is defined to be $\frac{1}{2} \dim V$. 
%The higher imprimitive
%quaternionic reflection groups are $G_n(??)$.

\subsection{} We continue to take $G=G(K,H,\alpha)$. Let $s_{12} \in
S_2 < G < K \wr S_2$ be the transposition. Following \cite[\S
2]{CohenQuaternionic}, let $L_\alpha := \{x \in K \mid x \alpha(x) \in
H\}$.  As observed in \cite{CohenQuaternionic}, the symplectic
reflections in $G$ are the elements of $H \times \{1\}$, $\{1\} \times
H$, and $\{(x,x^{-1}) s_{12} \mid x \in L_\alpha\}$.

% Recall that we have fixed $K$ to be a finite subgroup of
% $SL_2(\C)$, $H$ a normal subgroup of $K$, and $\alpha$ an automorphism
% of $\Gamma = H/K$ of order at most two. In \cite[\S
% 2]{CohenQuaternionic}, it is shown how one can associate to this data
% a symplectically imprimitive symplectic reflection group of rank four,
% %and hence, via complexification, an imprimitive symplectic reflection
% %group of rank four, 
% $G = G(K,H,\alpha)$. We write $H_i$, $i = 1,2$,
% for the subgroup of $Sp_4(\C)$ with $H$ on the $i^{th}$ block diagonal
% and the identity on the other block diagonal. Let $H^2 = H_1 \times
% H_2 \subset G$ and
% $$
% \widetilde{K} := \left\{ \left( \begin{array}{cc}
% x H & 0 \\
% 0 & \alpha(x H) 
% \end{array} \right) \ | \ x \in K \right\}.
% $$

\begin{lemma}\label{lem:normalsub}
  The subgroup $H \wr S_2$ is normal in $G$ if and only if $\alpha =
  1$, and the subgroup $H^2$ is always normal in $G$.
\end{lemma}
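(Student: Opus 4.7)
The plan is to verify both normality claims by checking how a convenient set of generators of $G = G(K,H,\alpha)$ acts by conjugation. Recall from the introduction that $G$ is generated by the transposition $s_{12} \in S_2$, by $H^2$, and by elements $(k, \tilde\alpha(k)) \in K^2$, where $\tilde\alpha(k) \in K$ is any representative of $\alpha(kH) \in K/H$ and $k$ ranges over $K$.

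For the second claim ($H^2 \triangleleft G$), the verification is immediate: since $H$ is normal in $K$, conjugation by any element of $K^2$ preserves $H^2$ componentwise, and conjugation by $s_{12}$ merely swaps the two factors of $H^2$. Every generator of $G$ therefore normalizes $H^2$.

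For the first claim, the essential computation is the conjugate of $s_{12}$ by a coset generator $(k, \tilde\alpha(k))$. Using the wreath-product identity $s_{12}(a,b)s_{12} = (b,a)$, one finds
\[
(k, \tilde\alpha(k))\, s_{12}\, (k, \tilde\alpha(k))^{-1} \;=\; \bigl(k\,\tilde\alpha(k)^{-1},\; \tilde\alpha(k)\, k^{-1}\bigr)\, s_{12}.
\]
Demanding that this element lie in $H \wr S_2 = H^2 \rtimes S_2$ is equivalent to $k\,\tilde\alpha(k)^{-1} \in H$, i.e., $kH = \alpha(kH)$, for every $k \in K$; and this is precisely the condition $\alpha = 1$. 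Conversely, when $\alpha = 1$ the element $k\,\tilde\alpha(k)^{-1}$ automatically lies in $H$, so conjugation by a coset generator sends $s_{12}$ into $H^2 s_{12} \subset H \wr S_2$; combined with the trivial observation that $H^2$ and $S_2$ normalize $H \wr S_2$, this gives the other direction.

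I do not anticipate any substantive obstacle: the lemma is essentially a bookkeeping exercise inside $K \wr S_2$, and the only thing requiring care is the semidirect-product convention used in the displayed conjugation formula.
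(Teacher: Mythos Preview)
Your proof is correct and follows essentially the same approach as the paper: a direct conjugation computation inside $K \wr S_2$. The only cosmetic difference is that the paper conjugates the coset $H^2 \cdot s_{12}$ by elements of the form $(x,y)\cdot s_{12}$ with $y \in \alpha(xH)$, arriving at the equivalent condition $xHy^{-1}=H$, whereas you conjugate $s_{12}$ by the coset generators $(k,\tilde\alpha(k))$; both reduce immediately to $kH=\alpha(kH)$ for all $k$.
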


\begin{proof}
  By considering $k = (x,y) \cdot s_{12}$ acting by conjugation on
  $H^2 \cdot s_{12}$, where $(x,y) \in K^2$ satisfies $y \in \alpha(x
  H)$, we see that $H \wr S_2$ is normal in $G$ if and only if $x H
  y^{-1} = H$ for all $x \in K$ and $y \in \alpha(x H)$. But this is
  the same as saying that $\alpha = 1$ on the quotient. The second
  statement is clear.
\end{proof}

The pair $(\mathrm{id},\alpha)$ defines an embedding of $H / K$ into
$(H/K)^2$. Since $\alpha$ has order at most $2$, $S_2$ preservers the
image of this map and we may form a twisted semi-direct product $(H /
K) \rtimes_{\alpha} S_2$. Then $G / K^2 \simeq (H / K)
\rtimes_{\alpha} S_2$. The involutions in $(H / K) \rtimes_{\alpha}
S_2$ are all given by $\{ (x,\alpha(x)) \ | \ x^2 = 1 \} \cup \{
(x,\alpha(x)) \cdot s_{12} \ | \ x \alpha(x) = 1 \}$ (note that the
latter condition $x \alpha(x) = 1$ can also be written as $x \in L_\alpha/H$).
%$ \in \bar{L}_{\alpha} \}$, where
%$\bar{L}_{\alpha} := L_{\alpha} / K$. 
The group $G / K^2$ is denoted $Q$.

%\newpage 

%\begin{sidewaystable}
%\begin{landscape}
\begin{table}
  \caption{Nonconjugate Symplectically Imprimitive, Symplectically 
Irreducible Proper Four-Dimensional Reflection Groups $G(K,H,\alpha)$}\label{tab:table1}
\centering
\rotatebox{90}{ 
\begin{tabular}{c|llllll} % creating 10 columns
  \hline % inserting double-line
  & $K$ & $H$ & $K/H$ & $\alpha$ & $|G(K,H,\alpha)$ & $|L_{\alpha}|$ \\ 
  \hline
  (A) & $\biDih_m$ & $\cyc_{2m}$ & $\cyc_2$ & $1$ & $16m^2$ & $4m$ \\
  (B) & $\biDih_{2ml}$ & $\cyc_{2m}$ & $\Dih_{2l}$ &  $\alpha_r^{\star} \left\{ \begin{array}{ll} 
      \textrm{where} & 0 \le r \le l, r \textrm{ odd } \\
      & l = \gcd (l, (r+1)/2) \gcd (l,(r-1)/2) 
\end{array} \right.$ & $32m^2l$ & $2m \left( \begin{array}{c} 
  \gcd (2l, r-1) \\
  + \\
  \gcd (2l, r+1) \end{array} \right)$ \\
(C) & $\biDih_{(2m +1)l}$ & $\cyc_{2m+1}$ & $\biDih_l$ &  $\beta_r^{\dagger} \left\{ \begin{array}{ll} 
    \textrm{where} & 0 \le r \le l, r \textrm{ odd } \\
    & l = \gcd (l, (r+1)/2) \gcd (l,(r-1)/2) 
\end{array} \right. $ & $8(2m+1)^2l$ & $(2m +1) \left( \begin{array}{c} 
\gcd (2l, r-1) \\
+ \\
\gcd (2l, r+1) \end{array} \right)$ \\
(D) & $\biDih_{2m}$ & $\biDih_m$ & $\cyc_2$ &  $1$ & $64m^2$ & $8m$ \\
(E) & $\biDih_m$ & $\biDih_m$ & $1$ &  $1$ & $32m^2$ & $4m$ \\
(F) & $\biDih_{2m+1}$ & $\cyc_2$ & $\Dih_{2m+1}$ &  $\alpha_r^{\star} \left\{ \begin{array}{ll} 
\textrm{where} & 0 \le r \le m, \\
 & 2m + 1 = \gcd (2m + 1, r +1) \gcd (2m+1,r-1) 
\end{array} \right. $ & $16(2m+1)$ & $2 \left( \begin{array}{c} 
\gcd (2m + 1, r+1) \\
+ \\
\gcd (2m +1, r-1) \end{array} \right)$ \\
(G) & $\biDih_m$ & $1$ & $\biDih_m$ & $\beta_r^{\dagger} \left\{ \begin{array}{ll} 
\textrm{where} & 0 \le r \le m, r \textrm{ odd} \\
 & m = \gcd (m, (r +1)/2) \gcd (m,(r-1)/2) 
\end{array} \right. $ & $8m$ & $\left( \begin{array}{c} 
\gcd (2m, r+1) \\
+ \\
\gcd (2m, r-1) \end{array} \right)$ \\
(H) & $\tetra$ & $\tetra$ & $1$ &  $1$ & $1152$ & $24$ 
\end{tabular}
%\end{sidewaystable}
%\end{landscape}
}
\end{table}

\subsection{}

In Table \ref{tab:table1} the automorphism $\alpha_r \in
\mathrm{Aut}(\Dih_m)$ is defined by $\alpha_r(u) = u^r; \alpha_r(v) =
v$, where $\Dih_m = \langle u,v \ | \ u^m = v^2 = (uv)^2 = 1 \rangle$
and the automorphism $\beta_r \in \mathrm{Aut}(\biDih_m)$ is defined
by $\beta_r(\bzeta_m) = \bzeta_m^r$ and $\beta_r(\bk) = - \bk$.

%\begin{sidewaystable}
\begin{table}
\caption{Nonconjugate Symplectically
Imprimitive, Symplectically Irreducible Proper Four-Dimensional Reflection Groups $G(K,H,\alpha)$ (cont.)}\label{tab:table2}
\centering 
\begin{tabular}{c|llllll} % creating 10 columns
\hline % inserting double-line
 & $K$ & $H$ & $K/H$ & $\alpha$ & $|G(K,H,\alpha)$ & $|L_{\alpha}|$ \\ 
\hline
(I) & $\tetra$ & $\biDih_2$ & $\cyc_3$ & Inversion & $384$ & $24$ \\
(J) & $\tetra$ & $\cyc_2$ & $\mathrm{Alt}(4)$ & Conjugation by $(12)$ & $96$ & $12$ \\
(K) & $\tetra$ & $1$ & $\tetra$ & Conjugation by $(i - j)$ & $48$ & $12$ \\
(L) & $\octa$ & $\octa$ & $1$ & $1$ & $4608$ & $48$ \\
(M) & $\octa$ & $\tetra$ & $\cyc_2$ & $1$ & $2304$ & $48$ \\
(N) & $\octa$ & $\biDih_2$ & $\Dih_3$ & $1$ & $768$ & $32$ \\
(O) & $\octa$ & $\cyc_2$ & $\mathrm{Sym}(4)$ & $1$ & $192$ & $14$ \\
(P) & $\octa$ & $1$ & $\octa$ & Conjugation by $k$ & $96$ & $18$ \\
(Q) & $\octa$ & $1$ & $\octa$ & $1 \neq \alpha \in \mathrm{Ker} : \mathrm{Aut}(\octa) \rightarrow \mathrm{Aut}(\mathrm{Sym}(4))$ & $96$ & $14$ \\
(R) & $\isohedral$ & $\isohedral$ & $1$ & $1$ & $14400$ & $120$ \\
(S) & $\isohedral$ & $\cyc_2$ & $\mathrm{Alt}(5)$ & $1$ & $480$ & $32$ \\
(T) & $\isohedral$ & $\cyc_2$ & $\mathrm{Alt}(5)$ & Conjugation by $(12)$ & $480$ & $20$ \\
(U) & $\isohedral$ & $1$ & $\isohedral$ & Conjugation by $j$ & $240$ & $30$ \\
(V) & $\isohedral$ & $1$ & $\isohedral$ & Preimage of conjugation by $(12)$ & $240$ & $20$ \\
 & & & & under $\mathrm{Aut}(\isohedral) \rightarrow \mathrm{Aut}(\mathrm{Alt}(5))$ & & \\
\hline
\end{tabular}
\end{table}
%\end{sidewaystable}

\section{Singular subgroups of $G$}

This section is rather technical, therefore we provide an outline. We
wish to show that the $G(K,H,\alpha)$ with $H \neq \{1\}$ 
%which are
%listed in Tables \ref{tab:table1} and \ref{tab:table2} 
do not admit
symplectic resolutions. The purpose of this section is to provide two
general criteria, Theorem \ref{thm:singularimpliesnores} and
Proposition \ref{prop:singc2}, for the non-existence of projective symplectic
reflections for these groups. This is done by analyzing the set of
points on $\ResH \times \ResH$ that have non-trivial stabilizer under
the action of the group $Q$ and showing that this set has components
of dimension zero.

\subsection{} A subgroup $P$ of $\Gamma$ is said to be a
\textit{parabolic} subgroup if there exists some $x \in \ResH$ such
that $\Stab_{\Gamma}(x) = P$. The set of all points $x$ in $\ResH$
such that $\Stab_{\Gamma}(x) = P$ is denoted $\ResH (P)$. It is a
smooth, though not generally connected, locally closed subset of
$\ResH$. The fact that $\ResH$ is smooth implies that each connected
component of $\ResH (P)$ is pure-dimensional. We denote by $\ResH
(P)_0$, resp. $\ResH (P)_{> 0}$, the union of all components of
dimension zero, resp. greater than zero, in $\ResH (P)$. Key to our
classification theorem is the following technical definition.

\begin{definition}\label{defn:singular}
Let $P$ be a parabolic subgroup of $\Gamma$.  
\begin{enumerate}
\item A point $x \in \ResH (P)_0$ is \textit{isolated} if there exists
  no parabolic subgroup $\{1 \} \neq T \subsetneq P$ such that $x \in
  \overline{\ResH (T)}$.
\item The group $P$ is said to be \textit{singular} if there exist
  isolated points $x \in \ResH (P)_0$ and $y \in \ResH (\alpha(P))_0$
  such that $y \notin \Gamma \cdot x$.
\end{enumerate}
\end{definition}

Note that a singular subgroup of $\Gamma$ is actually the data of a
four-tuple $(P,x;\alpha(P),y)$.

\begin{lemma}\label{lem:niceopen}
  If $P$ is a singular subgroup of $\Gamma$ then there exists an
  affine open, $Q$-stable subset $U \subset \ResH \times \ResH$ and
  closed point $p \in U$ such that $\Stab_{Q}(p) = P$ and $Q$ acts
  freely on $U \backslash Q \cdot p$, where $P$ is realized as a
  subgroup of $Q$ via $(\mathrm{id},\alpha)$.
\end{lemma}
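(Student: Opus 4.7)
The plan is to take $p := (x, y)$ and construct $U$ as the preimage under the quotient map $\pi : \ResH \times \ResH \to (\ResH \times \ResH)/Q$ of a suitable affine open neighborhood of $\bar p := \pi(p)$. First I verify that $\Stab_Q(p) = P$. The embedding $P \hookrightarrow Q$ realizes $P$ as $\{(g, \alpha(g)) : g \in P\}$, and such an element fixes $(x, y)$ iff $g \in \Stab_\Gamma(x) = P$ and $\alpha(g) \in \Stab_\Gamma(y) = \alpha(P)$, both of which are equivalent to $g \in P$. A ``twisted'' element $(g, \alpha(g)) s_{12}$ acts on $\ResH \times \ResH$ by $(z, w) \mapsto (g w, \alpha(g) z)$, so fixing $p$ would force $y = g^{-1} x \in \Gamma \cdot x$, contradicting the singular hypothesis.

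Next I would show that $p$ is an isolated point of the non-free locus $B := \bigcup_{q \in Q \setminus \{1\}} \Fix(q) \subseteq \ResH \times \ResH$. The preceding argument already shows that no $\Fix(q)$ for $q$ twisted passes through $p$, so near $p$ only the loci $\Fix_{\ResH}(g) \times \Fix_{\ResH}(\alpha(g))$ for $g \in P \setminus \{1\}$ can occur in $B$. The key sublemma is that $x$ is an isolated point of $\Fix_{\ResH}(g)$: if a positive-dimensional component $C$ of $\Fix_{\ResH}(g)$ passed through $x$ with generic $\Gamma$-stabilizer $T \supseteq \langle g \rangle$, then after shrinking $C$ to a neighborhood of $x$ in which every $\Gamma$-stabilizer is contained in $P$ (possible because $\Gamma$ is finite), one would have $T \subseteq P$. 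If $T \subsetneq P$ then $x \in \overline{\ResH(T)}$, contradicting the isolated hypothesis in Definition~\ref{defn:singular}; if $T = P$ then $C$ lies in the component of $\Fix_{\ResH}(P)$ through $x$, which is $\{x\}$ since $x \in \ResH(P)_0$. Applying the symmetric argument to $y$ and $\alpha(P)$ completes the verification.

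Since $p$ is isolated in $B$, by $Q$-equivariance so is every point of the orbit $Q \cdot p$, and hence $B \setminus Q \cdot p$ is closed in $\ResH \times \ResH$. Its image under the finite morphism $\pi$ is closed in the quasi-projective quotient and avoids $\bar p$. Choosing any affine open neighborhood $\bar U$ of $\bar p$ disjoint from $\pi(B \setminus Q \cdot p)$, the set $U := \pi^{-1}(\bar U)$ is $Q$-stable, affine (because $\pi$, being finite, is affine), contains $p$, and meets $B$ exactly in $Q \cdot p$, so that $Q$ acts freely on $U \setminus Q \cdot p$.

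The main obstacle is the sublemma in step two: it is precisely there that both conditions in the definition of ``isolated'' in Definition~\ref{defn:singular} are needed, one handling the case $T = P$ (using that $x \in \ResH(P)_0$ lies in a zero-dimensional component) and the other handling proper nontrivial $T \subsetneq P$ (using $x \notin \overline{\ResH(T)}$). Everything else is either a direct orbit computation or a standard application of the fact that finite-group quotients of quasi-projective varieties exist and are obtained via affine morphisms.
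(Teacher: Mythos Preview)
Your argument is correct and reaches the same conclusion as the paper's, but the route is genuinely different in its organization. The paper proceeds by first constructing an explicit candidate open set: it takes $U_0$ to be the regular locus together with all products $\ResH(P_i)_{\mathrm{iso}} \times \ResH(\alpha(P_i))_{\mathrm{iso}}$ over the $\Gamma$-conjugates $P_i$ of $P$ and $\alpha(P)$, deletes the ``diagonal'' $\Delta = \bigcup_{h \in \Gamma}\{(h\cdot z, z)\}$, and then proves the resulting set $U_1$ is open via a stabilizer-stratification argument (this is where the isolated hypothesis enters). Only afterwards does it shrink to a $Q$-stable affine neighborhood, citing \cite[Lemma 1.3]{BertinNotes} for affineness.

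You instead go straight for the local statement that $p$ is isolated in the non-free locus $B$, proving this by the sublemma that $x$ is isolated in every $\Fix_{\ResH}(g)$ for $g \in P \setminus \{1\}$ (and symmetrically for $y$). This is exactly the same content as the paper's openness argument for $U_1$, just localized at the single point $p$ rather than phrased globally. Your use of the quotient map $\pi$ to produce the affine open $U$ is cleaner than the paper's: because $\pi$ is a finite morphism, the preimage of any affine open is automatically affine and $Q$-stable, so no separate lemma is needed. The paper's explicit description of $U_1$ is perhaps more informative about the global structure of the stabilizer stratification, but for the purposes of the lemma your argument is both shorter and more self-contained.
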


\begin{proof}
  Let $P = P_1, P_2 = \alpha(P), \ds, P_k$ be all the conjugates of
  either $P$ or $\alpha(P)$ in $\Gamma$. The dense open subset of
  $\ResH \times \ResH$ consisting of all points with trivial
  stabilizer is denoted $(\ResH \times \ResH)_{\reg}$ and write
  $\ResH(P_i)_{\mathrm{iso}}$ for the set of all isolated points in
  $\ResH(P_i)_0$. Set
$$
U_0 = (\ResH \times \ResH)_{\reg} \cup \left[ \bigcup_{i = 1}^k
  \ResH(P_i)_{\mathrm{iso}} \times
  \ResH(\alpha(P_i))_{\mathrm{iso}}\right].
$$
It is a $Q$-stable, dense subset of $\ResH \times \ResH$. Let 
$$
\Delta = \bigcup_{h \in \Gamma} \{ (h \cdot x, x) \in \ResH \times
\ResH \ | \ x \in \ResH \}.
$$
It is a proper, closed, $Q$-stable subvariety of $\ResH \times
\ResH$. We set $U_1 = U_0 \backslash \Delta$ and claim that $U_1$ is a
$Q$-stable, dense \textit{open} subset of $\ResH \times \ResH$. That
it is $Q$-stable and dense is straight-forward. To see that it is
open, we note firstly that the stabilizer of any $x \in U_1$ is either
trivial or $(\mathrm{id} \times \alpha)(P_i)$. This basically follows
from the fact that if $s_{12} \cdot (h,\alpha(h))$ stabilizes $x$ for
some $h \in \Gamma$, then $x \in \Delta$. Now decompose
$$
(\ResH \times \ResH) \backslash U_1 = \bigsqcup_{\beta} C_{\beta}
$$
into the connected components of the stabilizer stratification of
$(\ResH \times \ResH) \backslash U_1$. To show that $U_1$ is open, it
suffices to show that $\overline{C}_{\beta} \cap U_1 = \emptyset$:
clearly $\overline{C}_{\beta} \cap (\ResH \times \ResH)_{\reg} =
\emptyset$, therefore if $\overline{C}_{\beta} \cap U_1 \neq
\emptyset$, then there exists some point $x \in \overline{C}_{\beta}
\backslash C_{\beta} \cap U_1$ whose stabilizer is $(\mathrm{id}
\times \alpha)(P_i)$. If $y \in C_{\beta}$ then $\Stab_{Q}(y)
\subseteq (\mathrm{id} \times \alpha)(P_i)$, which implies that $y =
(y_1,y_2)$ with $\Stab_{\Gamma}(y_1) \subseteq P_i$. In this case
there exists some connected components $D_1$ of
$\ResH(\Stab_{\Gamma}(y_1))$ and$D_2$ of
$\ResH(\alpha(\Stab_{\Gamma}(y_1)))$ such that $C_{\beta} \subset D_1
\times D_2$ is a dense subset.  This implies that $x \in \overline{D_1
  \times D_2}$, contradicting the fact that $x = (x_0,x_1)$ with each
$x_i$ isolated.

\subsection{} By assumption, there exists some $x \in \ResH(P)_{0}$
and $y \in \ResH(\alpha(P))_0$ such that $y \notin \Gamma \cdot
x$. Set $p = (x,y)$. The condition $y \notin \Gamma \cdot x$ implies
that $p \notin \Delta$. Therefore it belongs to $U_1$. We may choose
an open subset $U_2$ of $p$ in $U_1$ such that the stabilizer of every
point in $U_2$ (except $p$ itself) has trivial stabilizer. Then the
set $U$ we require is $\bigcup_{g \in Q} g \cdot U_2$, except that $U$
may not be affine. However, \cite[Lemma 1.3]{BertinNotes} shows that
we can replace $U$ by a smaller affine open subset.
\end{proof}

\begin{theorem}\label{thm:singularimpliesnores}
  If there exists a singular subgroup $P$ of $\Gamma$ then $V / G$
  does not admit a projective symplectic resolution.
\end{theorem}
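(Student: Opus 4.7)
The plan is to apply Proposition~\ref{prop:contradiction} with the normal subgroup $H^2 \lhd G$, quotient $Q = G/H^2$, and the projective symplectic resolution $\pi : \ResH \times \ResH \to \C^4/H^2$, which arises as a product of projective symplectic resolutions of Kleinian singularities and to which the $Q$-action lifts by functoriality of the minimal resolution. (That $H^2$ acts on $\C^4$ as a symplectic reflection group is immediate from the fact that $(h,1)$ and $(1,h)$ with $h \in H \setminus \{1\}$ are symplectic reflections and generate $H^2$.) Assuming $V/G$ admits a projective symplectic resolution, Proposition~\ref{prop:contradiction} produces a projective symplectic resolution of $T_q(\ResH \times \ResH)/Q_q$ for every $q$. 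Taking $q = p = (x,y)$ the point furnished by Lemma~\ref{lem:niceopen}, so that $Q_p = P$ under the embedding $(\id,\alpha)$, this gives a projective symplectic resolution of the quotient
\[
(T_x\ResH \oplus T_y\ResH) \big/ P,
\]
where $g \in P$ acts as $g \oplus \alpha(g)$.

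The contradiction will come from showing that $P$ contains no symplectic reflections in this four-dimensional representation, combined with Verbitsky's theorem \cite{Verbitsky} already invoked in the introduction; note that $P$ is automatically nontrivial, since the open locus $\ResH(\{1\})$ has no zero-dimensional components. The key intermediate statement is that $(T_x\ResH)^T = 0$ for every subgroup $\{1\} \neq T \subseteq P$, and likewise for $T_y\ResH$ under any $\{1\} \neq T \subseteq \alpha(P)$.

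To prove the vanishing, suppose $(T_x\ResH)^T \neq 0$. Analytic linearization of the $P$-action at its fixed point $x$ shows $\ResH^T$ is a positive-dimensional smooth subvariety through $x$. Since $\Stab_\Gamma(\cdot) \subseteq P$ in a neighbourhood of $x$ (each $\ResH^S$ being closed), the generic stabilizer along this subvariety is some nontrivial parabolic $T'$ with $T \subseteq T' \subseteq P$. If $T' = P$ we obtain a positive-dimensional component of $\ResH(P)$ through $x$, contradicting $x \in \ResH(P)_0$; if $T' \subsetneq P$ we get $x \in \overline{\ResH(T')}$ for a nontrivial proper parabolic of $P$, contradicting the isolation of $x$ required by Definition~\ref{defn:singular}. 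Given this vanishing, for any $g \in P \setminus \{1\}$ both $1 - g$ on $T_x\ResH$ and $1 - \alpha(g)$ on $T_y\ResH$ are invertible (using that the involution $\alpha \in \mathrm{Aut}(\Gamma)$ is a bijection, so $\alpha(g) \neq 1$), hence each of rank $2$ on its two-dimensional domain. Consequently $\rk(1 - (g,\alpha(g))) = 4$ on $T_p(\ResH \times \ResH)$, so no element of $P \setminus \{1\}$ acts as a symplectic reflection, completing the argument. The main obstacle I expect is a careful execution of the linearization and semicontinuity step; the rest is a direct assembly of Lemma~\ref{lem:niceopen}, Proposition~\ref{prop:contradiction}, and Verbitsky's theorem.
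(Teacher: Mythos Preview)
Your proof is correct and follows essentially the same route as the paper's: both assemble Lemma~\ref{lem:niceopen}, Proposition~\ref{prop:contradiction}, and Verbitsky's theorem. The only difference is that where you re-derive the absence of symplectic reflections in $Q_p$ directly from the isolation condition in Definition~\ref{defn:singular}, the paper instead invokes the stronger conclusion of Lemma~\ref{lem:niceopen} (that $Q$ acts freely on $U \setminus Q\cdot p$), which immediately gives that $Q_p$ acts freely on $T_p U \setminus \{0\}$; your argument effectively reproves this portion of the lemma.
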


\begin{proof}
Let 
$$
F = \bigcup_{1 \neq h \in Q} \mathrm{Fix}(h)
$$
be the closed subset of $\ResH \times \ResH$ consisting of all points
with non-trivial stabilizer. If $U$ is the open subset of $\ResH
\times \ResH$ whose existence is guaranteed by Lemma
\ref{lem:niceopen}, then $U \cap F = Q \cdot p$ is a finite union of
points. Let $x \in U \cap F$ and $Q_x$ the stabilizer of $x$. By
assumption $Q_x \neq 0$. By assumption, $Q_x$ acts freely on $T_x U
\backslash \{ 0 \}$, therefore $T_x U / Q_x$ is an isolated symplectic
singularity. Hence $Q_x$ contains no symplectic reflections. Therefore
$(T_x U,Q_x)$ does not admit any symplectic resolution by Verbitsky's
Theorem, \cite{Verbitsky}. Then Proposition \ref{prop:contradiction}
implies that $V / G$ does not admit a projective symplectic
resolution.
% If $\ResH \times \ResH / Q$ admits a symplectic resolution then
% \cite[Proposition 3.4]{FuSurvey} says that $F$ must be pure
% dimensional of codimension two i.e. each irreducible component of
% $F$ must be two dimensional (since $\dim \ResH = 2$). However, if
% $U$ is the open subset of $\ResH \times \ResH$ whose existence is
% guarenteed by Lemma \ref{lem:niceopen}, then $U \cap F = Q \cdot p$
% is a finite union of points. Hence $F$ has components of dimension
% zero and $\ResH \times \ResH / Q$ cannot admit any symplectic
% resolution.
\end{proof}

\subsection{}\label{sec:singularconidtion} The case where $H = \cyc_2$
occurs several times in Tables \ref{tab:table1} and
\ref{tab:table2}. In this case $\ResH = T^* \mathbf{P}^1$ and the
action of $\Gamma$ on $T^* \mathbf{P}^1$ comes from the embedding
$\Gamma \hookrightarrow PSL_2(\C) = \mathrm{Aut}(\mathbf{P}^1)$. Let
$V = \C^2$ with basis $v_1,v_2$ and $x_1, x_2$ the dual basis of $V^*$
so that $\C[V] = \C[x_1,x_2], \C[V^*] = \C[v_1,v_2]$. Then
$$
T^* \mathbf{P}^1 = \{ (v,w) \in (V \backslash \{ 0 \}) \times V^* \ |
\ w(v) = 0 \} / \C^{\times}.
$$
The charts $U_1 = (x_2 \neq 0)$, $U_2 = (x_1 \neq 0)$ cover $T^*
\mathbf{P}^1$ and
$$
\C[U_1] = \C \left[ \frac{x_1}{x_2}, v_1 x_2 \right], \quad \C[U_2] =
\C \left[ \frac{x_2}{x_1}, v_2 x_1 \right].
$$
We say that $T \subset \Gamma$ is a \textit{maximal cyclic} subgroup
of $\Gamma$ if it is a cyclic subgroup of $\Gamma$ and there is no
other cyclic subgroup of $\Gamma$ that properly contains $T$.

\begin{lemma}\label{lem:maxcyclic}
  Let $p \in \mathbf{P}^1$ and $T = \Stab_{\Gamma}(p)$. Then $T$ is a
  maximal cyclic subgroup of $\Gamma$.
\end{lemma}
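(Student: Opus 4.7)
The plan is to argue in two stages: first that $T$ is cyclic, and second that no proper cyclic overgroup of $T$ exists in $\Gamma$ (the implicit assumption being that $T$ is non-trivial, since $\{1\}$ is only maximal cyclic when $\Gamma$ itself is trivial).

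For cyclicity, I would identify the stabilizer of $p$ in $PSL_2(\C)$ as a Borel subgroup $B$, so that $T = \Gamma \cap B$. The Borel $B$ is an extension of the multiplicative torus $\C^\times$ by a unipotent radical isomorphic to the additive group $(\C,+)$, which is torsion-free. Any element of $B$ of finite order therefore lies in a Levi complement, i.e.\ in a maximal torus $\cong \C^\times$. Hence $T$ embeds into $\C^\times$, and every finite subgroup of $\C^\times$ is cyclic.

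For maximality, suppose $T \subsetneq T'$ with $T'$ cyclic in $\Gamma$, generated by an element $g$ of order $n$. Then $T = \langle g^k \rangle$ for some divisor $k$ of $n$. Choose any non-identity element $g^k \in T$; it fixes $p$ by hypothesis. Since $g$ has finite order in $PSL_2(\C)$, it is conjugate to a diagonal matrix $\mathrm{diag}(\zeta,\zeta^{-1})$, and so $g$ and all of its non-identity powers are simultaneously diagonalizable in the same basis, sharing the same two fixed points on $\mathbf{P}^1$. Thus $p$ is fixed by $g$ as well, giving $g \in T$ and so $T' \subseteq T$, a contradiction.

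The main obstacle is essentially bookkeeping: once one invokes the structure of a Borel in $PSL_2(\C)$ and the fact that all nontrivial powers of a finite-order element of $PSL_2(\C)$ share the same fixed locus, the argument is immediate. The subtlety to keep an eye on is the edge case $T = \{1\}$, which is implicitly ruled out by the context in which the lemma is applied (points $p \in \mathbf{P}^1$ with non-trivial $\Gamma$-stabilizer, arising from the fixed-point analysis on $T^*\mathbf{P}^1$), but is not addressed by the statement itself.
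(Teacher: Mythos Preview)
Your proof is correct and takes essentially the same approach as the paper: both show that the stabilizer, being a finite subgroup of a Borel in $PSL_2(\C)$, must lie in a torus (you via torsion-freeness of the unipotent radical, the paper via an explicit matrix check that the off-diagonal entry vanishes for finite order), hence is cyclic, and both deduce maximality from the fact that a nontrivial power of a diagonalizable element shares its fixed points on $\mathbf{P}^1$. The paper, like you, tacitly assumes $T \neq \{1\}$.
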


\begin{proof} 
  Assume that $1 \neq g \in \Gamma$ fixes $p$. Diagonalizing $g$, we
  may assume that $p = [1:0]$ or $[0:1]$. This implies that every
  non-zero $g$ fixes exactly two points in $\mathbf{P}^1$. If $h$ is
  another non-zero element of $\Gamma$ that fixes $[1:0]$ say then
$$
g = \left(\begin{array}{cc}
    \eta & 0 \\
    0 & \eta^{-1}
\end{array} \right), \quad h = \left(\begin{array}{cc}
\zeta & b \\
0 & \zeta^{-1} 
\end{array} \right), 
$$
where $\eta, \zeta$ are some roots of unity. The matrix $h$ above has
infinite order if $b \neq 0$. Therefore the fact that $\Gamma$ is
finite implies that $g$ and $h$ are contained in some common cyclic
subgroup of $\Gamma$ and fix the same points of $\mathbf{P}^1$. Hence
the stabilizer of $[1:0]$ in $\mathbf{P}^1$ is some maximal cyclic
subgroup of $\Gamma$.
\end{proof}

\begin{proposition}\label{prop:singc2}
  If there exists a point $p \in \mathbf{P}^1$ such that
  $|\Stab_{\Gamma}(p)| > 2$, then there exists a point $x \in T^*
  \mathbf{P}^1 \times T^* \mathbf{P}^1$ such that $(Q_x, T_x (T^*
  \mathbf{P}^1 \times T^* \mathbf{P}^1))$ does not admit a 
  symplectic
  resolution.
\end{proposition}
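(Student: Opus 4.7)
Set $T := \Stab_\Gamma(p)$; by Lemma~\ref{lem:maxcyclic}, $T$ is a maximal cyclic subgroup of $\Gamma$ of order $n := |T| > 2$.  Let $\{p, p'\}$ be the fixed points of $T$ on $\mathbf{P}^1$ and $\{q, q'\}$ those of $\alpha(T)$.  The plan is to find $x \in T^*\mathbf{P}^1 \times T^*\mathbf{P}^1$ of the form $((a, 0), (b, 0))$ on the union of zero sections, with $a \in \{p, p'\}$, $b \in \{q, q'\}$, and $b \notin \Gamma \cdot a$, so that the stabilizer $Q_x$ reduces to a cyclic subgroup acting on $T_xX \cong \C^4$ with no symplectic reflections; Verbitsky's theorem~\cite{Verbitsky} then yields that $T_xX/Q_x$ admits no symplectic resolution.

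Granted such a pair $(a, b)$, the stabilizer analysis runs as follows.  No twisted-swap element $(g, \alpha(g)) s_{12} \in Q$ can stabilize $x$, as that would force $g \cdot b = a$, contradicting $b \notin \Gamma \cdot a$.  A diagonal element $(g, \alpha(g)) \in Q$ stabilizes $x$ precisely when $g \in T$, the further condition $\alpha(g) \in \Stab_\Gamma(b) = \alpha(T)$ being automatic since $\alpha$ restricts to an isomorphism $T \xrightarrow{\sim} \alpha(T)$.  Thus $Q_x$ is the $\alpha$-diagonal embedding of $T$ in $Q$, cyclic of order $n$.  On $T_xX = T_aT^*\mathbf{P}^1 \oplus T_bT^*\mathbf{P}^1$ a generator $t$ acts as $t \oplus \alpha(t)$, with eigenvalues $\eta, \eta^{-1}$ on the first summand and $\eta', (\eta')^{-1}$ on the second, for primitive $n$-th roots of unity $\eta, \eta'$ coming from the faithful weight-one characters of $T$ and $\alpha(T)$ on the respective tangent lines.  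Since $n > 2$, no nonidentity power of $t$ has $1$ as an eigenvalue on $T_xX$; so $\rk(1 - t^k) = 4$ and $Q_x$ contains no symplectic reflections.

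To justify the existence of $(a, b)$: if $\alpha(T)$ is not $\Gamma$-conjugate to $T$, then no element of $\Gamma$ carries a fixed point of $T$ to a fixed point of $\alpha(T)$ (any such conjugating element would conjugate $T$ to $\alpha(T)$ by maximality of stabilizers), and any choice of $(a,b)$ works.  If instead $\alpha(T) = g_0 T g_0^{-1}$ is $\Gamma$-conjugate to $T$, then $\{q, q'\} = g_0\{p, p'\}$, and we may take $a = p$, $b = g_0 p'$ provided $p' \notin \Gamma \cdot p$, i.e., provided $N_\Gamma(T) = T$.

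The main obstacle is the residual edge case in which $\alpha(T)$ is $\Gamma$-conjugate to $T$ \emph{and} $N_\Gamma(T)$ properly contains $T$, so that $\{p, p'\} \subset \Gamma \cdot p$ and every fixed point of $\alpha(T)$ lies in $\Gamma \cdot p$.  This occurs, for instance, when $\Gamma$ is dihedral with $T$ its index-two cyclic subgroup, e.g.\ in row (F) of Table~\ref{tab:table1}.  There a refined construction is required: either one chooses $x$ on a different stratum of $T^*\mathbf{P}^1 \times T^*\mathbf{P}^1$ so that the offending twisted-swap reflections no longer stabilize $x$, or one invokes Proposition~\ref{prop:contradiction} to identify $(Q_x, T_xX)$ with a two-dimensional symplectic reflection group already excluded by the low-dimensional analysis of Tables~\ref{tab:table1}--\ref{tab:table2}.
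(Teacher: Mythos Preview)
Your argument is correct up to the ``residual edge case'' you identify, but that case is substantive and you do not actually resolve it.  When $\alpha(T)$ is $\Gamma$-conjugate to $T$ and $N_\Gamma(T) \supsetneq T$, every fixed point of $\alpha(T)$ lies in $\Gamma \cdot p$, so no choice of $(a,b)$ avoids the swap stabilizer.  Your two suggested remedies do not work as stated: moving $x$ off the zero section does not help, since the only points of $T^*\mathbf{P}^1$ with nontrivial $T$-stabilizer are the two points $p,p'$ on the zero section (the action of a generator on the tangent chart at $p$ has eigenvalues a primitive $n$-th root of unity and its inverse, hence no other fixed points), so the obstruction persists; and Proposition~\ref{prop:contradiction} goes in the wrong direction for your purpose, nor do Tables~\ref{tab:table1}--\ref{tab:table2} classify the two-dimensional local models that arise here.

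The paper's proof follows a different strategy precisely to cover this case.  Rather than trying to arrange $b \notin \Gamma \cdot a$, it fixes $x=(p_1,q_1)$ once and for all and analyzes $\Stab_Q(x)$ according to whether or not a swap element is present.  When no swap element stabilizes $x$, the argument is essentially yours: the stabilizer is cyclic of order $m>2$ acting with no eigenvalue $1$, hence with no symplectic reflections, and Verbitsky's theorem applies.  When a swap element $t = s_{12} h_{(1)} \alpha(h)_{(2)}$ is present, the paper computes explicitly: either the subgroup of $\Stab_Q(x)$ generated by symplectic reflections is a \emph{proper} subgroup (so Verbitsky again), or else one is forced into the situation $a=-1$, in which case $(\Stab_Q(x), T_xX) \cong (G(m,m,2), \mathfrak h \oplus \mathfrak h^*)$.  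This last pair is a complex reflection group in its doubled reflection representation, and by \cite{Singular} it admits a symplectic resolution only for $m \leq 2$, contradicting $m>2$.  Identifying the edge-case stabilizer as $G(m,m,2)$ and invoking \cite{Singular} is the missing ingredient in your argument.
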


\begin{proof}
  By Lemma \ref{lem:maxcyclic}, the stabilizer of $p$ is a maximal
  cyclic subgroup of $\Gamma$. Therefore we may assume that
$$
\Stab_{\Gamma}(p) = \langle s \rangle \simeq C_{m}
$$
for some $m > 2$. Let $r = \alpha(s)$. Thinking of $s$ and $r$ as
elements of order $2m$ in $K$, we may assume that $s$ is a diagonal
matrix with respect to the basis $v_1, v_2$ of $V$. We choose another
basis $w_1,w_2$ of $V$ and dual basis $y_1, y_2$ of $V^*$ such that
$r$ is diagonal with respect to these basis. Then there exists a
primitive $2m^{th}$ root of unity $\zeta$ such that
$$
s \cdot v_1 = \zeta v_1, \quad s \cdot v_2 = \zeta^{-1} v_2, \quad r
\cdot w_1 = \zeta^a w_1, \quad r \cdot w_2 = \zeta^{-a} w_2,
$$
for some integer $a$ coprime to $2m$. Let $p_1 = [1:0], p_2 = [0:1]$,
resp. $q_1 = [1:0], q_2 = [0:1]$, with respect to the coordinates
$[x_1:x_2]$, resp. $[y_1: y_2]$ of $\mathbf{P}^1$. Then $s \cdot p_i =
p_i$ and $r \cdot q_i = q_i$ for $i = 1,2$. Let $x = (p_1,q_1) \in T^*
\mathbf{P}^1 \times T^* \mathbf{P}^1$. Recall that $Q = (H/K)
\rtimes_{\alpha} S_2$. It is a subgroup of $(H / K) \wr S_2 = (H/K)^2
\cup (H/K)^2 \cdot s_{12}$. We decompose $T = \Stab_{Q}(x)$ as
$$
T_0 = T \cap (H/K)^2, \quad T_1 = (H/K)^2 \cdot s_{12}. 
$$
There are two cases to consider: case $a)$ $T_1 = \emptyset$, and case
$b)$ $T_1 \neq \emptyset$.

We begin by considering case $a)$. In this case we have $T = T_0 =
\langle s_{(1)} r_{(2)} \rangle$. Let $e_1,e_2,f_1,f_2 =
(\frac{x_1}{x_2})^*, (v_1 x_2)^*, (\frac{y_1}{y_2})^*, (w_1 y_2)^*$ so
that
$$
T_x(T^* \mathbf{P}^1 \times T^* \mathbf{P}^1)) = \C \cdot \{
e_1,e_2,f_1,f_2 \}.
$$ 
With respect to this chosen basis of $T_x(T^* \mathbf{P}^1 \times T^*
\mathbf{P}^1)$, we have
$$
s_{(1)} r_{(2)} = \left( \begin{array}{cccc}
    \zeta^2 & 0 & 0 & 0 \\
    0 & \zeta^{-2} & 0 & 0 \\
    0 & 0 & \zeta^{2a} & 0 \\
    0 & 0 & 0 & \zeta^{-2a}
\end{array} \right). 
$$
This is not a symplectic reflection. Therefore $(Q_x,T_x(T^*
\mathbf{P}^1 \times T^* \mathbf{P}^1))$ does not admit a symplectic
resolution by Verbitsky's Theorem, \cite{Verbitsky}.

In the second case, there exists some $h \in \Gamma$ such that $t =
s_{12} h_{(1)} \alpha(h)_{(2)} \in T_1$ (in fact, $T_1 = T_0 \cdot
t$). Hence $h \cdot p_1 = q_1$ and $\alpha(h) \cdot q_1 = p_1$. This
implies that
$$
\alpha(h) h = s^{\lambda}, \quad h \alpha(h) = r^{\mu},
$$
for some $\lambda$ and $\mu$. Applying $\alpha$ to the above equations
shows that in fact $\lambda = \mu$. Moreover
$$
h^{-1} \Stab_{\Gamma}(q_1) h = \alpha(h) \Stab_{\Gamma}(q_1) h = \Stab_{\Gamma}(p_1).
$$
Therefore, $h \cdot p_2 = q_2$ and $\alpha(h) \cdot q_2 = p_2$. If
$\Stab_Q(x)$ is generated by symplectic reflections then we may assume
that $t$ is a symplectic reflection. We have
$$
t^2 = \alpha(h)_{(1)} h_{(1)} h_{(2)} \alpha(h)_{(2)} = (s_{(1)} r_{(2)})^{\lambda}. 
$$
This cannot be a symplectic reflection. Therefore it must be the
identity, $\lambda = 0$, and $t^2 = \pm \mathrm{id}$ in $G$. Possibly
after rescaling, we have $h \cdot x_1 = y_1$ and $h \cdot x_2 =
y_2$. Thus, with respect to the basis $e_1,e_2,f_1,f_2$ above, $t$ is
given by
$$
t = \left( \begin{array}{cccc}
    0 & 0 & 1 & 0 \\
    0 & 0 & 0 & 1 \\
    1 & 0 & 0 & 0 \\
    0 & 1 & 0 & 0
\end{array} \right). 
$$
Then $\Stab_Q(x) = C_m \cup C_m \cdot t$. The characteristic
polynomial of $(s_{(1)} r_{(2)})^i \cdot t$ is $u^4 - (\zeta^{2i(a+1)}
+ \zeta^{-2i(a+1)}) u^2 + 1$. Therefore it is a symplectic reflection
if and only if $\zeta^{2i(a+1)} + \zeta^{-2i(a+1)} = 2$ i.e. $i(a+1) =
0$ modulo $m$. If $a + 1$ is not zero modulo $m$ then the solutions to
this equation form a proper subgroup $C_d$ of $C_m$. Then the subgroup
of $Q$ generated by all symplectic reflections is the proper subgroup
$G(d,d,2)$ of $Q$. Thus, for $\Stab_Q(x)$ to be a symplectic
reflection group we must have $a = -1$. In this case,
$$
(\Stab_Q(x), T_x (T^* \mathbf{P}^1 \times T^* \mathbf{P}^1)) \simeq
(G(m,m,2),\mathfrak{h} \oplus \mathfrak{h}^*),
$$
where $\mathfrak{h}$ is the reflection representation for
$G(m,m,2)$. By \cite{Singular}, such a pair admits a symplectic
resolution only if $m = 1,2$.
\end{proof}

\begin{remark}
  Note that there will exist a point $p \in \mathbf{P}^1$ such that
  $|\Stab_{\Gamma}(p)| > 2$ if and only if there is an element in $K$
  of order at least $6$.
\end{remark}

\section{The action of $\Gamma$ on $\Hilb^H(\C^2)$}

In this section we show case-by-case (following Tables
\ref{tab:table1} and \ref{tab:table2}) that the symplectically
imprimitive and irreducible symplectic reflection groups
$G(K,H,\alpha)$ with $H \neq \{1\}$
%in dimension four
 satisfy at least one of the criterion given by
Theorem \ref{thm:singularimpliesnores} and Proposition
\ref{prop:singc2}. Our results are summarized in Theorem
\ref{thm:rankfourclass}.

\subsection{} The minimal resolution of $\C^2 / H$ is denoted
$\ResH$. We denote by $\Hilb^n \C^2$ the Hilbert scheme of $n$ points
in the plane. This is a smooth, symplectic manifold of dimension $2n$,
see \cite{NakajimaBook}.

\begin{proposition}
  The action of $G / H^2$ on $\QuotH \times \QuotH$ lifts to an action
  of $G / H^2$ on $\ResH \times \ResH$.
\end{proposition}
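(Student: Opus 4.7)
The plan is to realize $\ResH$ as the $H$-Hilbert scheme $\Hilb^H(\C^2)$ and then exploit the natural action on this moduli space of the normalizer of $H$ in $\GL_2(\C)$, which contains $K$ since $H$ is normal in $K$. By the theorem of Ito-Nakamura \cite{ItoNakamuraHilbert}, the Hilbert-Chow morphism $\Hilb^H(\C^2) \to \C^2/H = \QuotH$ is the minimal resolution, so $\ResH \cong \Hilb^H(\C^2)$ as varieties over $\QuotH$.

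The key step is to produce a $K/H$-action on $\Hilb^H(\C^2)$ lifting the $K/H$-action on $\QuotH$. Since $H$ is normal in $K$, for any $k \in K$ the rule $I \mapsto k \cdot I$ sends $H$-invariant ideals of $\C[x,y]$ to $H$-invariant ideals and preserves the condition that $\C[x,y]/I$ be isomorphic, as an $H$-module, to the regular representation $\C[H]$. This defines an action of $K$ on $\Hilb^H(\C^2)$, and the subgroup $H$ acts trivially, as every $H$-invariant ideal is fixed pointwise by $H$. The result is a $K/H$-action on $\ResH$ which, by equivariance of Hilbert-Chow, lifts the $K/H$-action on $\QuotH$.

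Taking two copies yields a $(K/H)^2$-action on $\ResH \times \ResH$ lifting the $(K/H)^2$-action on $\QuotH \times \QuotH$. The $S_2$-swap on $\ResH \times \ResH$ obviously lifts the corresponding swap on $\QuotH \times \QuotH$, and the semidirect-product relations between these two pieces hold by inspection, giving an action of the wreath product $(K/H) \wr S_2 = (K/H)^2 \rtimes S_2$ on $\ResH \times \ResH$ compatible with the one on $\QuotH \times \QuotH$. Since $Q = G/H^2 = (K/H) \rtimes_\alpha S_2$ embeds into $(K/H) \wr S_2$ via the twisted diagonal $kH \mapsto (kH,\alpha(kH))$ together with the identity on $S_2$, restriction to this subgroup produces the required $Q$-action on $\ResH \times \ResH$, lifting the action on $\QuotH \times \QuotH$.

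The main subtlety is the verification that conjugation by $K$ genuinely induces an algebraic action on the $H$-Hilbert scheme and that $H$ itself acts trivially; once this is in hand, everything else is formal from the functoriality of Hilbert-Chow and the universal property of the minimal resolution of a du Val singularity, which guarantees that the lifted transformations are automorphisms of $\ResH \times \ResH$ over $\QuotH \times \QuotH$.
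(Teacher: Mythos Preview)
Your proof is correct and follows essentially the same approach as the paper: both realize $\ResH$ as the $H$-Hilbert scheme and use the action of $K$ (which normalizes $H$) on ideals to lift the $K/H$-action, observing that $H$ itself acts trivially. The paper phrases this slightly differently, viewing $\Hilb^H \C^2$ as the distinguished component of the fixed locus $(\Hilb^{|H|} \C^2)^H$ and letting the full group $G < K \wr S_2$ act directly on $\Hilb^{|H|} \C^2 \times \Hilb^{|H|} \C^2$ via its action on $\C^4$, but this amounts to the same construction you carry out by assembling the $(K/H)^2$-action with the swap.
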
 

\begin{proof}
  Let $n = |H|$. The action of $K$ on $\C^2$ induces an action of $K$
  on $\Hilb^n \C^2$. One can realize $\ResH$ as the component $\Hilb^H
  \C^2$ of $(\Hilb^n \C^2)^H$ whose generic point $I$ is a radical
  ideal (or in other words $V(I)$ is a free $H$-orbit). This is a
  $K$-stable subvariety of $(\Hilb^n \C^2)^H$ and $\ResH \times \ResH$
  is a $G$-stable subvariety of $\Hilb^n \C^2 \times \Hilb^n \C^2$. By
  definition, the action of $G$ on $\ResH \times \ResH$ factors
  through $G / H^2$.
\end{proof}

\subsection{} We will identify $\ResH$ with $\Hilb^H \C^2$ throughout
this section. By the McKay correspondence, the vertices $\{ \rho_i
\}_{i \in I}$ of the Dynkin diagram can be put in bijection with the
irreducible components of the exceptional fiber of the minimal
resolution of $\C^2 / H$ in such a way that the edges between two
vertices are in bijection with the number of points (which is always
$0$ or $1$) of intersection of the two irreducible components. The
component labeled by the vertex $\rho_i$ is denoted $\mathbf{P}_i$. On
the other hand, the vertices of the \textit{affine} Dynkin diagram can
be naturally labeled by the isomorphism classes $\Irr (H)$ of simple
$H$-modules such that $\dim \Hom_{H}(\C^2 \o \rho_i,\rho_j)$ is twice
the number of edges between the vertices $\rho_i$ and $\rho_j$ (note
that the representation $\C^2$ is self-dual so that $\dim
\Hom_{H}(\C^2 \o \rho_i,\rho_j) = \dim \Hom_{H}(\C^2 \o
\rho_j,\rho_i)$. The trivial representation $\rho_0$ labels an
extending vertex of the affine diagram. Therefore $\Irr_* (H) = \Irr
(H) \backslash \{ \mathrm{triv} \}$ labels the vertices of the
non-affine Dynkin diagram. This allows us to define two actions of
$\Gamma$ on the Dynkin diagram of $H$ by graph automorphisms. The
\textit{geometric action} is defined by $g \cdot \rho_i = \rho_j$ if
$g(\mathbf{P}_i) = \mathbf{P}_j$ and the edge labeled by $p \in
\mathbf{P}_{i_1} \cap \mathbf{P}_{i_2}$ is sent to the edge labeled by
$g \cdot p \in g(\mathbf{P}_{i_1}) \cap g(\mathbf{P}_{i_2})$. The
\textit{representation action} is defined by $g \cdot \rho_i = \rho_j$
if ${}^g \rho_i \simeq \rho_j$ and if $g \cdot \rho_{i_k} =
\rho_{j_k}$, for $k = 1,2$ with $\dim \Hom_{H}(\C^2 \o
\rho_{i_1},\rho_{i_2}) = 2$ then the fact that $g \in SL_2(\C)$
normalizes $H$ implies that ${}^g \C^2 \simeq \C^2$, hence
$$
\dim \Hom_{H}(\C^2 \o \rho_{i_1},\rho_{i_2}) = \dim \Hom_{H}(\C^2 \o
\rho_{j_1},\rho_{j_2})
$$
and so $g$ take the edge between $\rho_{i_1}$ and $\rho_{i_2}$ to the
edge between $\rho_{j_1}$ and $\rho_{j_2}$. Using $\Hilb^H \C^2$, Ito
and Nakumura, \cite{ItoNakamuraHilbert}, constructed a natural
bijection between the irreducible components of the exceptional fiber
and $\Irr_* (H)$ in such a way that the geometric action and the
representation action become equal (a beautiful case free proof was
later given by Crawley-Boevey \cite{CBKleinian}). We recall their
bijection. They showed that for $I \in \pi^{-1}(0)$, the socle of
$\C[x,y]/I$ is either irreducible as a $H$-module or consists of a
pair of non-isomorphic simple $H$-modules. Moreover, if $I \in
\mathbf{P}_i$ does not belong to any other component then the socle of
$\C[x,y]/I$ is irreducible and the isomorphism class of this simple
module depends only on $\mathbf{P}_i$ (and not on the specific choice
of $I$). Hence we may label the Dynkin diagram so that the socle of
$\C[x,y]/I$, with $I \in \mathbf{P}_i$ generic, is $\rho_i$. If $I \in
\mathbf{P}_i \cap \mathbf{P}_j$ then they showed that
$$
\mathrm{soc} (\C[x,y] / I) \simeq \rho_i \oplus \rho_j, \quad \textrm{
  and } \quad \dim \Hom_{H}(\C^2 \o \rho_{i},\rho_{j}) = 2.
$$
If the socle of $\C[x,y] / I$ is isomorphic to $\lambda$ say as a
$H$-module then

\begin{lemma}
  The bijection $\rho_i \leftrightarrow \mathbf{P}_i$ intertwines the
  geometric action and the representation action of $\Gamma$ on the
  Dynkin diagram of $H$.
\end{lemma}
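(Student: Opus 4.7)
The plan is to deduce the intertwining directly from the Ito--Nakamura description of the bijection via socles. The idea is to transport an ideal $I \in \mathbf{P}_i$ by a lift $\tilde g \in K$ of $g \in \Gamma$, and to read off the representation action on the socle of the quotient ring by exploiting that $\tilde g$ normalises $H$.

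First I would fix a lift $\tilde g \in K$ of a given $g \in \Gamma$. Both actions descend to $\Gamma$: since any $I \in \Hilb^H \C^2$ is $H$-invariant, one has $(\tilde g h) \cdot I = \tilde g \cdot I$ for $h \in H$, so the component $\mathbf{P}_j = \tilde g(\mathbf{P}_i)$ depends only on $g$; similarly, changing $\tilde g$ by an element of $H$ alters the conjugation automorphism $h \mapsto \tilde g h \tilde g^{-1}$ only by an inner automorphism of $H$, so the twist ${}^g \rho_i$ is well-defined up to isomorphism. For generic $I \in \mathbf{P}_i$, the image $\tilde g \cdot I$ is generic in $\mathbf{P}_j$, and by Ito--Nakamura, $\mathrm{soc}(\C[x,y]/I) \simeq \rho_i$ and $\mathrm{soc}(\C[x,y]/(\tilde g \cdot I)) \simeq \rho_j$ as $H$-modules.

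The key computation is the equivariance of the natural $\C$-algebra isomorphism $\phi_g \colon \C[x,y]/I \to \C[x,y]/(\tilde g \cdot I)$ defined by $[f] \mapsto [\tilde g \cdot f]$. Because $\tilde g$ normalises $H$, for $h \in H$ one has
\[
\phi_g(h \cdot [f]) = [(\tilde g h) \cdot f] = [(\tilde g h \tilde g^{-1}) \cdot (\tilde g \cdot f)] = (\tilde g h \tilde g^{-1}) \cdot \phi_g([f]).
\]
Hence $\phi_g$ intertwines the $H$-module structure on $\C[x,y]/I$ with the one on $\C[x,y]/(\tilde g \cdot I)$ pulled back through the automorphism $h \mapsto \tilde g h \tilde g^{-1}$. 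Since $\phi_g$ is a $\C$-algebra isomorphism, it maps socle to socle, and restricting gives an $H$-equivariant isomorphism between $\rho_i$ and the twist of $\rho_j$ by this conjugation, which is precisely the content of ${}^g \rho_i \simeq \rho_j$. This is exactly the statement that the representation action of $g$ sends $\rho_i$ to $\rho_j$, so the two actions agree on vertices.

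For the compatibility on edges, I would take $I$ generic in an intersection $\mathbf{P}_i \cap \mathbf{P}_k$; then $\tilde g \cdot I$ is generic in $\mathbf{P}_j \cap \mathbf{P}_l$ with $\mathbf{P}_j = \tilde g(\mathbf{P}_i)$, $\mathbf{P}_l = \tilde g(\mathbf{P}_k)$, so the geometric action sends the edge $\{i,k\}$ to the edge $\{j,l\}$. By Ito--Nakamura, the socles split as $\rho_i \oplus \rho_k$ and $\rho_j \oplus \rho_l$, and because $\phi_g$ respects isotypic decompositions (up to the twist), the edge labels match as required. The main obstacle I anticipate is purely bookkeeping: one must pin down a consistent convention for ${}^g\rho$ versus ${}^{g^{-1}}\rho$ and verify that the normaliser action of $\tilde g$ on $H$ coincides with the automorphism used to define the representation action, but no new geometric input is needed beyond Ito--Nakamura once this is done.
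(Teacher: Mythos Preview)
Your proposal is correct and follows essentially the same approach as the paper: both transport an ideal $I$ by a lift $\tilde g \in K$, use that this induces an isomorphism ${}^g(\C[x,y]/I) \simeq \C[x,y]/\tilde g(I)$ of $H$-modules (the paper phrases this via the short exact sequence $0 \to I \to \C[x,y] \to \C[x,y]/I \to 0$ and the fact that ${}^g\C[x,y] \simeq \C[x,y]$ as $H$-modules), and then restrict to socles. Your version is simply more explicit about the equivariance computation, the descent to $\Gamma$, and the edge compatibility, whereas the paper's proof is a terse two-sentence sketch.
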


\begin{proof}
  It is straight-forward, but we include a brief explanation for the
  readers convenience. Let $g \in K$. Applying $g$ to the short exact
  sequence $0 \rightarrow I \rightarrow \C[x,y] \rightarrow \C[x,y] /
  I \rightarrow 0$ and using the fact that ${}^g \C[x,y] \simeq
  \C[x,y]$ as a $H$-module implies that ${}^g( \C[x,y] / I) \simeq
  \C[x,y] / g (I)$. The identification restricts to ${}^g
  \mathrm{soc}( \C[x,y] / I) \simeq \mathrm{soc}( \C[x,y] / g (I))$.
\end{proof} 

The above lemma gives us an easy, representation theoretic way to
describe the action of $\Gamma$ on the irreducible components of the
exceptional fiber.

\begin{remark}
  The action of $\Gamma$ on $\ResH$ preserves the symplectic
  form. Therefore each component of the closed subvariety
  $(\ResH)^{\Gamma}$ is either two-dimensional or
  zero-dimensional. Since $\ResH$ is irreducible and the action of
  $\Gamma$ is effective, we see that $(\ResH)^{\Gamma}$ is actually a
  finite collection of points.
\end{remark}

\subsection{(A)}

In this case we have $K = \cyc_{2m}$, $H = \biDih_{m}$ and $\Gamma =
\cyc_{2}$, which is generated by the image $g$ of $\bk$. The
irreducible representations of $\cyc_{2m}$ are labeled $\rho_{i}$, $0
\le i \le 2m - 1$ and ${}^g \rho_{i} = \rho_{2m - i}$. Therefore the
only irreducible representations fixed by $\cyc_2$ are $\rho_0$ and
$\rho_m$. As described above, the irreducible component of the
exceptional locus corresponding to $\rho_i$ is denoted $\mathbf{P}_i$
so that $\mathbf{P}_m$ is the only component that is mapped to itself
by $g$. There are exactly two points $p,q$ in $\mathbf{P}_m$ that are
fixed $g$. Obviously, $q \notin \Gamma \cdot p$. Since $\alpha = 1$,
$(\cyc_{2},p;\cyc_2,q)$ is a singular subgroup of $\cyc_2$.

%$$
%S_{\rho_m}(\mathfrak{m}/\mathfrak{n}) = \C \cdot \{ x^m,y^m \}, \quad \mathbf{P}(\rho_{m}) = \{ a x^m + b y^m \ | \ [a:b] \in \mathbf{P}^1 \}, 
%$$
%and $g \cdot x^m = y^m$, $g \cdot y^m = x^m$. Hence there are two
% points fixed by $\cyc_{2}$, which are $x^m + y^m, x^m - y^m$. Since
% $\alpha = 1$, $(\cyc_{2},x^m + y^m;\cyc_2, x^m - y^m)$ is a singular
% subgroup of $\cyc_2$.

\subsection{(B)}

%We have
%$$
%S_{\rho_i}(\mathfrak{m}/\mathfrak{n}) = \C \cdot \{ x^i,y^{2m -i} \},
% \quad \mathbf{P}(\rho_{i}) = \{ a x^i + b y^{2m - i} \ | \ [a:b] \in
% \mathbf{P}^1 \},
%$$
%and hence $\cyc_{2l}$ acts on $\mathbf{P}(\rho_{i})$ by $u \cdot [a:b] = [a : \zeta^{-2m} b]$. The only points in $\mathbf{P}(\rho_{i})$ with non-trivial stabilizer are $[1:0]$ and $[0:1]$. For $\rho_m$, $\Dih_{2l}$ acts on $\mathbf{P}(\rho_{m})$ as the projectivization of the reflection representation. There are several points with non-trivial stabilizer. Note that this action factors through $\Dih_{2l} / Z(\Dih_{2l})$. Therefore, since $\alpha_r(\cyc_{2l}) = \cyc_{2l}$, we have that $(\cyc_{2l},x^i;\cyc_{2l},x^{2m - i})$ is a singular subgroup, \textit{provided} $m \neq 1$.\\

In this case we have $H = \cyc_{2m}$, $K = \biDih_{2ml}$ and $\Gamma =
\Dih_{2l} = \langle u,v \rangle$, where $u$ is the image of $\bzeta$
and $v$ is the image of $\bk$. The element $u$ acts trivially on $\Irr
(\cyc_{2m})$ and the action of $v$ is given by ${}^v \rho_{i} =
\rho_{2m - i}$. Therefore $\Stab_{\Dih_{2l}}(\rho_i) = \cyc_{2l}$ if
$1 \le i \neq m \le 2m-1$ and $\Stab_{\Dih_{2l}}(\rho_m) =
\Dih_{2l}$. \textit{Provided} $m \neq 1$, we choose some $1 \le i \neq
m \le 2m-1$ and let $p,q \in \mathbf{P}_i$ be the two points whose
stabilizer is $\cyc_{2l} = \langle u \rangle$. Since $v$ maps
$\mathbf{P}_i$ to $\mathbf{P}_{2m - i}$, we have $q \notin \Gamma
\cdot p$. Noting that $\alpha_r (\cyc_{2l}) = \cyc_{2l}$,
$(\cyc_{2l},p;\cyc_{2l},q)$ is a singular subgroup.  \\

Assume now that $m = 1$. Then we are in the situation described in
(\ref{sec:singularconidtion}). When $l = 1$, the group $G$ is the
subject of the paper \cite{smoothsra}, where it is shown that the
corresponding quotient singularity admits a projective symplectic
resolution. The group $\biDih_{2l}$ contains a cyclic subgroup of
order $4l$. Therefore, when $l \ge 2$, there is an element in $K$ of
order $\ge 8$. Therefore we may apply Proposition \ref{prop:singc2}.

% The maximal cyclic subgroups of $\Dih_{2l}$ are $\cyc_{2l} = \langle
% u \rangle$ and $\cyc_2 = \langle u^i v \rangle$, for $i =
% 0,\ds,2l-1$We have $\alpha_r(\cyc_{2l}) = \cyc_{2l}$ and one can
% check that it does not satisfy the condition
% (\ref{eq:singularconidtion}), hence Proposition
% \ref{prop:singularconidtion} says that it is not singular. Also,
% $\alpha_r(u^i v) = u^{ir} v$ and $u^a v \cdot u^i v \cdot u^a v =
% u^{2a -i} v, u^a \cdot u^i v \cdot u^{-a} = u^{2a +i} v$. Therefore
% $u^a v \cdot u^i v \cdot u^a v = u^{ir} v$ if and only if $2 a
% \equiv (r-1)i \ \mathrm{mod} \ 2l$, since $r-1$ is even there are
% always two solutions to this equation. Similarly, $u^a \cdot u^i v
% \cdot u^{-a} = u^{2a +i} v$ gives $2 a \equiv (r+1)i \ \mathrm{mod}
% \ 2l$, for which there are always four solutions. Hence $|
% \Hom_{\Gamma}(\cyc_2,\alpha_r(\cyc_2)) / \cyc_2| = 2$ and $\cyc_2$
% is not singular.

\subsection{(C)}

In this case we have $H = \cyc_{2m+1}$, $K = \biDih_{(2m+1)l}$ and
$\Gamma = \biDih_{l} = \langle g,t \rangle$, where $g$ is the image of
$\bzeta$ and $t$ is the image of $\bk$. The element $g$ acts trivially
on $\Irr (\cyc_{2m+1})$ and ${}^t \rho_{i} = \rho_{2m + 1 -
  i}$. Therefore $\Stab_{\biDih_{l}}(\rho_i) = \cyc_{2l}$ for all $1
\le i \le 2m$. Choose some $1 \le i \le 2m$ and let $p,q \in
\mathbf{P}_i$ be the two points whose stabilizer is $\cyc_{2l}$. Then
$(\cyc_{2l},p;\cyc_{2l},q)$ is a singular subgroup for all $1 \le i
\le 2m$.

% The group $\cyc_{2l}$ acts on $\mathbf{P}(\rho_{i})$ by $g \cdot
% [a:b] = [a : \zeta^{-2m-1} b]$. The only points in
% $\mathbf{P}(\rho_{i})$ with non-trivial stabilizer are $[1:0]$ and
% $[0:1]$. Therefore $(\cyc_{2l},x^i;\cyc_{2l},x^{2m - i + 1})$ is a
% singular subgroup for all $1 \le i \le 2m$.

\subsection{(D)}

In this case we have $H = \biDih_{m}$, $K = \biDih_{2m}$ and $\Gamma =
\cyc_{2} = \langle g \rangle$, where $g$ is the image of
$\bzeta$. Note that $g(\bzeta^2) = \bzeta^2$ and $g(\bk) = \bzeta^2
\bk$. The element $g$ acts trivially on all irreducible
representations of $H$ except for two of the non-trivial
one-dimensional representations, which are swapped. Take any
$\mathbf{P}^1$ that is fixed by $\Gamma$ and let $p,q \in
\mathbf{P}^1$ be the two points whose stabilizer is $\Gamma$. Then
$(\cyc_2,p;\cyc_2,q)$ is a singular subgroup of $\Gamma$.

\subsection{(I)}

In this case we have $H = \biDih_{2}$, $K = \tetra$ and $\Gamma =
\cyc_3 = \langle g \rangle$, where $g$ is the image of $\bomega$. Then
$g$ permutes cyclically the three non-trivial one dimensional
representations of $H$ and fixes the unique irreducible two
dimensional representation, in addition to fixing the trivial
representation. Let $\mathbf{P}_2$ be the projective line labeled by
the two-dimensional irreducible representation of $H$ and $p,q \in
\mathbf{P}_2$ the two points whose stabilizer is $\Gamma$. Then
$(\cyc_3,p;\cyc_3,q)$ is a singular subgroup of $\Gamma$.

%We have 
%$$
%V_{3}(\rho_2) \oplus V_{3}(\rho_2) = \C \{ x^2y,xy^2 \} \oplus \C \{
% x^{3},y^{3} \},
%$$
%and 
%$$
%\mathbf{P}(\rho_2) = \C \{ a x^2y + b y^3, a x y^2 + b x^3 \ | \ [a:b] \in \mathbf{P}^1 \}. 
%$$
%From this, one can calculate that the action of $\cyc_3$ on $\mathbf{P}(\rho_2)$ is given by $g \cdot [1:0] = [1:1]$ and $g \cdot [0:1] = [3:-1]$. This implies that there are two points, $[\sqrt{3} : 1]$ and $[-\sqrt{3} : 1]$, in $\mathbf{P}^1(\rho_2)$ that are fixed by $\cyc_3$. Hence $(\cyc_3,[\sqrt{3} : 1];\cyc_3,[-\sqrt{3} : 1])$ is a singular subgroup. 

%\subsection{(J)}

%If one takes any of the cyclic groups $\cyc_3$ generated by a three cycle, then one can check that $|\Hom_{\mathrm{Alt}(4)}(\cyc_3,(12) \cdot \cyc_3)| = 3$ and that $\cyc_3$ is a maximal cyclic subgroup of $\mathrm{Alt}(4)$. Hence Proposition \ref{prop:singularconidtion} says that it is singular. For instance, one could take $\cyc_3 := \langle (1,2,3) \rangle$, then $\alpha(\cyc_3) = \langle (1,2,3) \rangle$ and one can check that the normalizer of $\cyc_3$ is itself. In fact, one can check that these give all singular subgroups of $\mathrm{Alt}(4)$.  

\subsection{(M)}

In this case we have $H = \tetra$, $K = \octa$ and $\Gamma = \cyc_2 =
\langle g \rangle$, where $g$ is the image of $\frac{1}{\sqrt{2}}(1 +
\bi)$. The element $g$ swaps the two non-trivial one dimensional
irreducible representations, swaps the two irreducible two dimensional
representation that are not isomorphic to the realization of $\tetra$
in $\SL_2(\C)$ and fixes all other irreducible representations
(i.e. it is the obvious symmetry of the Dynkin diagram coming from
taking duals of representations). Therefore if $\mathbf{P}$ is one of
the two exceptional components that is labeled by a non-trivial,
self-dual irreducible representation of $\tetra$ then there are
exactly two points $p,q$ in $\mathbf{P}$ whose stabilizer is
$\cyc_2$. Then $(\cyc_2,p;\cyc_2,q)$ is a singular subgroup of
$\Gamma$.

%The realization of $\tetra$ in $\SL_2(\C)$ is labeled $\rho_2$ in \cite[\S 14.1]{ItoNakamuraHilbert} and 
%$$
%S_{\rho_2}(\mathfrak{m}/\mathfrak{n}) = V_5(\rho_2) \oplus V_7(\rho_2) = \C \{ \gamma_1,\gamma_2 \} \oplus \C \{ s_1 \phi, s_2 \phi \},
%$$
%where $\gamma_1 = x^5 - 5 xy^4$, $\gamma_2 = y^5 - 5 x^4 y$, and 
%$$
%\phi = (x^2 + y^2)^2 + 4 \zeta_3 x^2 y^2, \quad s_1 = x^3 + (2 \zeta_3^2 + 1) x y^2, \quad s_2 = y^3 + (2 \zeta_3^2 + 1) x^2 y.
%$$
%Note that the action of $g$ is defined by $x \mapsto \zeta_8 x$, $y \mapsto \zeta_8^7 y$. Hence $g \cdot \gamma_1 = \zeta_8^5 \gamma_1$ and $g \cdot \gamma_2 = \zeta_8^3 \gamma_2$. 
 
\subsection{(N)}

In this case we have $H = \biDih_{2}$, $K = \octa$ and $\Gamma =
\Dih_{3} = \langle u,v \rangle$, where $u$ is the image of $\bomega$
and $v$ the image of $\frac{1}{\sqrt{2}}(1 + \bi)$ in $\Gamma$. Hence
$u^3 = v^2 = 1$. We label the irreducible representations of
$\biDih_2$ so that $\rho_0$ is the trivial representation, $\rho_2$ is
the two dimensional representation and $\rho_1,\rho_3$ and $\rho_4$
are the three non-trivial one-dimensional representations. Then
$$
u \cdot \rho_1 = \rho_4, \quad u \cdot \rho_4 = \rho_3, \quad u \cdot
\rho_3 = \rho_1,
$$
and $u$ fixes all other representations. Similarly, $v$ swaps $\rho_3$
and $\rho_4$ and fixes all other representations. Thus,
$\Stab_{\Gamma}(\rho_1) = \cyc_2$. The stabilizer of $\mathbf{P}_1$ is
$\cyc_2 = \langle v \rangle$. Let $p_1,q_1 \in \mathbf{P}_1$ be the
two points whose stabilizer is $\cyc_2$. The group $\alpha(\cyc_2)$
will also fix one of the extremal vertices $\rho_1$, $\rho_3$ or
$\rho_4$, without loss of generality we assume that it is
$\rho_3$. The points in $\mathbf{P}_3$ whose stabilizer is
$\alpha(\cyc_2)$ are $p_3,q_3$ say. Since both $\cyc_2$ and
$\alpha(\cyc_2)$ fix the central vertex $\rho_2$, one of the two
points $p_i$ or $q_i$ must be the intersection point $\mathbf{P}_2
\cap \mathbf{P}_i$. Let's say its $p_i$ in both cases. Then there can
be no element of $\Dih_{3}$ that maps $p_1$ to $q_3$. Thus,
$(\cyc_2,p_1;\alpha(\cyc_2),q_3)$ is a singular subgroup of $\Gamma$.

%We have 
%$$
%S_{\rho_1'}(\mathfrak{m}/\mathfrak{n}) = V_{2}(\rho_1') \oplus V_4(\rho_1') = \C \{ xy, x^4 - y^4 \}, \quad \mathbf{P}(\rho_1') = \{ a xy + b(x^4 - y^4) \ | \ [a:b] \in \mathbf{P}^1 \}. 
%$$
%Then $v \cdot [a:b] = [a:b]$. 

\subsection{The cases where $H = \cyc_2$}

In the cases (F),(J),(O), (S), and (T) we have $H =
\cyc_{2}$. Therefore we are in the situation described in
(\ref{sec:singularconidtion}). In all these cases the group $K$
contains at least one element of order $\ge 6$. Therefore we may apply
Proposition \ref{prop:singc2} to conclude that $(V,\omega,G)$ does not
admit a projective symplectic resolution.

\subsection{}

Now, let $G = G(K,H,\alpha) < \Sp_4(\C)$ be symplectically
irreducible.  In order for $V/G$ to admit a symplectic resolution, by
Verbitsky's theorem, $G$ must be a symplectic reflection group.  If
$G$ is not proper (cf.~\S \ref{ss:cohen}), then $G$ is a complex
reflection group, and in this case by \cite{Singular}, $V/G$ can admit
a symplectic resolution only if $G = K \wr S_2$ for $K$ a cyclic
Kleinian group, or else $G=G_4$. The latter possibility is excluded,
however, since as a subgroup of $\GL_2(\C)$ it is primitive, and hence
as a subgroup of $\Sp_4(\C)$ it is symplectically primitive.
Therefore, by \cite[Theorem 2.6]{CohenQuaternionic}, $(K,H,\alpha)$
must be on the list in Tables \ref{tab:table1} and \ref{tab:table2}.

Summarizing the above calculations and applying Theorem
\ref{thm:singularimpliesnores} therefore implies the following result.

\begin{theorem}\label{thm:rankfourclass} \label{t:main-4}
  If $G=G(K,H,\alpha)$ %is an imprimitive symplectic reflection group of
  %rank two 
  is such that $H\neq \{1\}$ and $H \neq K$, then $(V,\omega,G)$
  admits a projective symplectic resolution if and only if $G =
  G(\biDih_2,\cyc_2,\Id)$.
\end{theorem}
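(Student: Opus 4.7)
The plan is to regard the theorem as a compilation of the case-by-case work already carried out in the preceding subsections. My first step would be to invoke Cohen's classification (\cite[Theorem 2.6]{CohenQuaternionic}) to reduce to the statement that $(K,H,\alpha)$ is an entry of Tables \ref{tab:table1} and \ref{tab:table2}. The hypotheses $H \neq \{1\}$ and $H \neq K$ then immediately eliminate the cases (E), (G), (H), (K), (L), (P), (Q), (R), (U), (V), leaving only (A), (B), (C), (D), (F), (I), (J), (M), (N), (O), (S), (T) to treat.

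My second step is to sort the remaining cases according to which non-existence criterion applies. The cases (A), (C), (D), (I), (M), (N), together with case (B) when $m \neq 1$, were each handled in the earlier subsections by exhibiting an explicit singular subgroup $(P,x;\alpha(P),y)$ of $\Gamma = K/H$ in the sense of Definition \ref{defn:singular}, so Theorem \ref{thm:singularimpliesnores} precludes a projective symplectic resolution. The cases (F), (J), (O), (S), (T), together with case (B) when $m = 1$ and $l \geq 2$, all satisfy $H = \cyc_2$ with $K$ containing an element of order at least $6$; by the remark following Proposition \ref{prop:singc2}, this places us in the hypothesis of Proposition \ref{prop:singc2}, ruling out a projective symplectic resolution in these cases as well.

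The one remaining entry is case (B) at $m = l = 1$, which is exactly $G = G(\biDih_2, \cyc_2, \Id) \cong Q_8 \times_{\mathbf{Z}/2} D_8$. For this group, the projective symplectic resolution of $\C^4/G$ constructed in \cite{smoothsra} supplies the ``if'' direction and completes the theorem. All the substantive content has therefore been absorbed into the preceding subsections and into Theorem \ref{thm:singularimpliesnores} and Proposition \ref{prop:singc2}; the only work remaining in this concluding step is the routine bookkeeping check that the partition of Tables \ref{tab:table1} and \ref{tab:table2} described above really does exhaust every entry satisfying $\{1\} \neq H \neq K$, and the verification that case (B) at $m = l = 1$ indeed produces $G(\biDih_2, \cyc_2, \Id)$. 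There is no serious obstacle left: the main effort was the case-by-case construction of singular subgroups and the analysis of the $H = \cyc_2$ cases, already carried out above.
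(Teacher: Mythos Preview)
Your proposal is essentially correct and follows the same route as the paper: reduce to Cohen's tables, discard the entries with $H=\{1\}$ or $H=K$, and then partition the survivors according to whether Theorem~\ref{thm:singularimpliesnores} (via an explicit singular subgroup) or Proposition~\ref{prop:singc2} (via an element of order $\geq 6$ in $K$ when $H=\cyc_2$) applies, leaving only $G(\biDih_2,\cyc_2,\Id)$ for which \cite{smoothsra} supplies the resolution.

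There is, however, a genuine gap in your first step. Tables~\ref{tab:table1} and~\ref{tab:table2} (equivalently, \cite[Theorem~2.6]{CohenQuaternionic}) classify only the \emph{proper} symplectically imprimitive irreducible reflection groups in $\Sp_4(\C)$; but $G(K,H,\alpha)$ can be improper---for instance whenever $K$ is cyclic, in which case $G(K,H,\alpha)$ preserves a Lagrangian and is a complex reflection group of type $G(m,p,2)$. Such groups with $\{1\} \neq H \neq K$ do exist and are not covered by your case analysis. The paper closes this gap before invoking the tables: if $G$ is improper it is a complex reflection group, and by \cite{Singular} the quotient $\C^4/G$ can then admit a projective symplectic resolution only if $G$ is a wreath product $K' \wr S_2$ with $K'$ cyclic (forcing $H=K$, excluded by hypothesis) or $G=G_4$ (which is symplectically primitive, hence not of the form $G(K,H,\alpha)$ at all). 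Once you insert this short argument ahead of the appeal to Cohen's tables, your proof is complete and coincides with the paper's.
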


Note that $G(\biDih_2,\cyc_2,\alpha_1)$ is the symplectic reflection
group $Q_8 \times_{\mathbf{Z}_2} D_8$ studied in \cite{smoothsra}.

\section{The imprimitive groups in dimension $\geq 6$}

\subsection{} As in the introduction by \cite[Theorem
2.9]{CohenQuaternionic}, the symplectically imprimitive and
irreducible symplectic reflection groups in dimensions greater than
four are, up to conjugation, of the form $G_n(K,H)$ where $K <
\SL_2(\C)$ is a Kleinian group and $H \leq K$ contains the commutator
subgroup $[K,K]$.
%  described as follows. Let $H \lhd K \subset
% SL_2(\C)$ as in previous sections and assume now that $[K,K] \le
% H$. Then define $G_n(K,H)$ to be the group of all ``permutation
% matrices'' such that the non-zero entry in each row/column belongs to
% $K$ and the product of all non-zero entries lies in $H$.
% Precisely, by \cite[Theorem 2.9]{CohenQuaternionic}, every imprimitive
% quaternionic reflection group is isomorphic to $G_n(K,H)$ for suitable
% $n$, $K$ and $H$.

\begin{lemma}\label{lem:specialthreegroup}
  The symplectic reflection group $G_3(\cyc_2,\biDih_{2})$
  does not admit a projective symplectic resolution.
\end{lemma}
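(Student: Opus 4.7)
The plan is to apply Proposition \ref{prop:contradiction} with the normal subgroup $H^3 = \cyc_2^3 \lhd G := G_3(\biDih_2, \cyc_2)$. Here $H^3$ is generated by symplectic reflections (the three copies of $-\id$ on the $\C^2$-factors), the variety $\ResH^3 = (T^* \mathbf{P}^1)^3$ is a projective symplectic resolution of $V/H^3 = (\C^2/\cyc_2)^3$, and $Q := G/H^3 \cong N \rtimes S_3$ acts compatibly on $\ResH^3$, with
$$
N = \bigl\{(\bar k_1, \bar k_2, \bar k_3) \in (K/H)^3 \, : \, \bar k_1 \bar k_2 \bar k_3 = 1\bigr\}
$$
acting componentwise and $S_3$ permuting factors. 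Here $K/H \cong (\mathbf{Z}/2)^2$.

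The goal is to exhibit a point $x \in \ResH^3$ such that $Q_x$ acts on $T_xX$ without any symplectic reflections; Verbitsky's theorem applied to $T_xX/Q_x$, together with Proposition \ref{prop:contradiction}, will then preclude any projective symplectic resolution of $V/G$. Let $\bar\bi, \bar\bj, \bar\bk$ denote the three nontrivial elements of $K/H$, which satisfy $\bar\bi \bar\bj \bar\bk = 1$ (since $\bi\bj\bk = -1 \in H$). Each acts on $\mathbf{P}^1 \subset T^*\mathbf{P}^1$ with exactly two fixed points, and a direct computation in $PSL_2(\C)$ shows that the three resulting pairs form three \emph{distinct} $K/H$-orbits (namely $\{[1:0],[0:1]\}$, $\{[1:1],[1:-1]\}$, $\{[1:i],[1:-i]\}$ in the standard coordinates). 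I pick $p_1, p_2, p_3 \in \mathbf{P}^1$ to be fixed by $\bar\bi, \bar\bj, \bar\bk$ respectively, and set $x = (p_1, p_2, p_3)$.

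I then compute $Q_x$ directly. An element $(\bar a_1, \bar a_2, \bar a_3) \in N$ fixes $x$ precisely when each $\bar a_\ell$ stabilizes $p_\ell$ and $\prod \bar a_\ell = 1$, which forces $(\bar a_1, \bar a_2, \bar a_3) \in \{(1,1,1), (\bar\bi, \bar\bj, \bar\bk)\}$. No nontrivial $\sigma \in S_3$ can contribute, since the $p_\ell$ lie in three distinct $K/H$-orbits and thus cannot be mapped to one another by any element of $K/H$. Hence $Q_x \cong \cyc_2$, generated by $(\bar\bi, \bar\bj, \bar\bk)$. Since each of $\bar\bi, \bar\bj, \bar\bk$ has order two in $\Sp_2(\C) = \SL_2(\C)$ and the respective $p_\ell$ is an isolated fixed point, the induced action on each $T_{p_\ell}(T^*\mathbf{P}^1) \cong \C^2$ must be $-\id$ (the only order-two element of $\SL_2(\C)$ without a positive-dimensional fixed locus). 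Therefore the nontrivial element of $Q_x$ acts on $T_xX \cong \C^6$ as $-\id_{\C^6}$, and since $\rk(1-(-\id_{\C^6})) = 6 \neq 2$, it is not a symplectic reflection. This completes the argument.

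The delicate step is the choice of $x$. The naive diagonal choice $(p,p,p)$ with $p$ fixed by a single involution instead yields $Q_x \cong S_4$, acting on $\C^6$ as the doubled $3$-dimensional reflection representation, whose quotient \emph{does} admit a projective symplectic resolution (as a centred Hilbert scheme of $\C^2$) and gives no contradiction. The key insight is to use three different fixed points of three distinct involutions whose product happens to equal the identity in $K/H$, producing a stabilizer acting as the scalar $-\id$ on the full six-dimensional tangent space --- a symplectic reflection in dimension $2$ but not in dimension $6$.
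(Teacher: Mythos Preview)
Your proof is correct and follows essentially the same approach as the paper's: quotient by $H^3=\cyc_2^3$, let $Q$ act on $(T^*\mathbf{P}^1)^3$, and choose the point $x=(p_1,p_2,p_3)$ on the zero section with each $p_\ell$ fixed by a distinct nontrivial involution of $K/H\cong\Dih_2$, so that $Q_x\cong\cyc_2$ is generated by $(\bar\bi,\bar\bj,\bar\bk)$ acting as $-\id$ on $T_x\cong\C^6$; then Verbitsky's theorem together with Proposition~\ref{prop:contradiction} gives the conclusion. The paper reaches the same point $([1:0],[1:i],[1:1])$ and the same $\cyc_2$-stabilizer, but packages the computation slightly differently (passing through the larger group $\Dih_2\wr S_3$ and the diagonal locus $\Delta$ to argue isolation, then invoking the proof of Theorem~\ref{thm:singularimpliesnores}); your direct computation of the tangent action is a bit more streamlined but substantively identical.
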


\begin{proof}
  The group $G_3(\cyc_2,\biDih_2) / \cyc_2^3 \simeq G_3(\{ 1 \},
  \Dih_2)$ acts on $\mathsf{Y} \times \mathsf{Y} \times \mathsf{Y}$
  where $\mathsf{Y} = T^* \mathbf{P}^1$ is the minimal resolution of
  $\C^2 / \cyc_2$. We prove that $\C^6 / G_3(\cyc_2,\biDih_{2})$ does
  not admit a projective symplectic resolution by showing that there
  exists an isolated point $p \in \mathsf{Y}^3$ whose stabilizer with
  respect to $G_3(\{ 1 \}, \Dih_2)$ is non-trivial i.e. there is an
  affine open subset $U$ of $p$ such that $G_3(\{ 1 \}, \Dih_2)$ acts
  freely on $U \backslash \{ p \}$. Repeating the argument given in
  the proof of Theorem \ref{thm:singularimpliesnores} then implies the
  claim of the lemma.

  In order to simplify things, we consider the action of the larger
  group $G' := \Dih_2 \wr S_2$ on $\mathsf{Y}^3$. As in the proof of
  Lemma \ref{lem:niceopen}, let
$$
\Delta_{1,2} = \bigcup_{h \in \Dih_2} \{ (h \cdot x, x,y) \in
\mathsf{Y}^3 \ | \ (x,y) \in \mathsf{Y}^2 \}
$$
and $\Delta = \Delta_{1,2} \cup \Delta_{1,3} \cup \Delta_{2,3}$ (where
$\Delta_{i,j}$ is defined in the obvious manner). This is a proper
closed subset of $\mathsf{Y}^3$. If there exists some $p \in
\mathsf{Y}^3$ and $g \in G' \backslash \Dih_2^3$ such that $g \cdot p
= p$ then $p \in \Delta$. To get an isolated point we need to consider
points in $\mathbf{P}^1 \times \mathbf{P}^1 \times \mathbf{P}^1$ not
contained in $\Delta$. The group $\Dih_2 \simeq \cyc_2 \times \cyc_2$
acts on $\mathbf{P}^1$ by the image of its reflection representation
in $PSL_2(\C)$. Thus, the three non-trivial elements of $\Dih_2$ are
$$
g = \left(\begin{array}{cc}
i & 0 \\
0 & -i 
\end{array} \right), \quad h = \left(\begin{array}{cc}
  0 & 1 \\
  -1 & 0 
\end{array} \right), \quad gh = \left(\begin{array}{cc}
0 & i \\
i & 0 
\end{array} \right) \ \in PSL_2(\C). 
$$
The fixed points of $g$ are $F_g = \{ [1:0],[0:1] \}$, of $h$ are $F_h
= \{ [1:i],[1:-i] \}$ and of $gh$ are $F_{gh} = \{ [1:1],[1:-1]
\}$. Each set $F_w$ is stable under the action of $\Dih_2$ with both
the non-trivial elements of $\Dih_2$ not equal to $w$ swapping the two
points in $F_w$. If we take
$$
p = ([1:0],[1:i],[1:1]) \in \mathsf{Y}^3 \backslash \Delta
$$
then, noting that $g \cdot h \cdot gh = 1$ in $PSL_2(\C)$, the
stabilizer of $p$ in $G'$ is $\{1,g \} \times \{ 1, h \} \times \{ 1 ,
gh \} \simeq \cyc_2^3$ and hence the stabilizer of $p$ in $G_3(\{ 1
\}, \Dih_2)$ is $\{ (1,1,1),(g,h,gh) \} \simeq \cyc_2$ and $p$ is
isolated.
\end{proof}

Therefore we may conclude:

\begin{theorem}\label{t:main-g4}
  Let $n > 2$. Then the symplectic quotient $\C^{2n} / G_n(K,H)$ admits a
  projective symplectic resolution if and only if $K = H$.
\end{theorem}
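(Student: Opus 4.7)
The ``if'' direction is standard: when $K=H$, we have $G_n(K,K) = K \wr S_n$, and the Hilbert scheme $\Hilb^n(\ResK)$ of the minimal resolution $\ResK$ of $\C^2/K$ provides a projective symplectic resolution of $\C^{2n}/(K \wr S_n)$ via the Hilbert--Chow morphism composed with $\mathrm{Sym}^n(\pi)$.

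For the converse, assume $H \neq K$. The plan is to prove by induction on $n \geq 2$ that $\C^{2n}/G_n(K,H)$ does not admit a projective symplectic resolution, with two separate base cases. The first is $n=2$, where Theorem \ref{t:main-4} directly handles every $(K,H)$ with $H \neq K$ except $(K,H) = (\biDih_2,\cyc_2)$. The second is $(n,K,H) = (3,\biDih_2,\cyc_2)$, which is exactly Lemma \ref{lem:specialthreegroup}. The second base case is needed because $G_2(\biDih_2,\cyc_2) \simeq Q_8 \times_{\mathbf{Z}/2} D_8$ \emph{does} admit a symplectic resolution, so for this particular family one cannot begin the induction at $n=2$; one must instead begin at $n=3$.

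For the inductive step, suppose $n \geq 3$ and that $\C^{2(n-1)}/G_{n-1}(K,H)$ does not admit a projective symplectic resolution, and suppose for contradiction that $\C^{2n}/G_n(K,H)$ does. Pick a point $q \in \C^2$ with trivial $K$-stabilizer and set $x = (0,\ldots,0,q) \in V = \C^{2n}$. A direct computation shows that $(k_1,\ldots,k_n)\sigma \in G_n(K,H)$ fixes $x$ precisely when $\sigma(n)=n$, $k_n = 1$, and $k_1 \cdots k_{n-1} \in H$; equivalently, $\Stab_{G_n(K,H)}(x) = G_{n-1}(K,H) \times \{1\}$, acting on the first $n-1$ factors of $V = (\C^2)^n$ and trivially on the last. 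Applying Proposition \ref{prop:contradiction} with the trivial normal subgroup of $G_n(K,H)$---for which $V$ itself serves as its own projective symplectic resolution---yields that $V/\Stab_{G_n(K,H)}(x) \simeq \bigl(\C^{2(n-1)}/G_{n-1}(K,H)\bigr) \times \C^2$ admits a projective symplectic resolution. Since the $\C^2$ factor is smooth symplectic, this forces $\C^{2(n-1)}/G_{n-1}(K,H)$ to admit one as well, contradicting the inductive hypothesis.

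The only technical point worth verifying is that Proposition \ref{prop:contradiction} may indeed be applied with the trivial normal subgroup; equivalently, that existence of a projective symplectic resolution of $V/G$ forces the same for $V/G_x$ at every $x \in V$. This follows from Kaledin's Theorem 1.4 of \cite{KaledinDynkin} (the key input behind Proposition \ref{prop:contradiction}) applied to the formal neighbourhood of $\bar x \in V/G$, whose completion is canonically isomorphic to $\widehat{T_x V}/G_x = \widehat{V}/G_x$. Once this is granted the induction is routine and no further case analysis is required.
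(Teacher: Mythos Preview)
Your argument is essentially the paper's own proof: reduce to a smaller-rank stabilizer via Kaledin's theorem (Proposition~\ref{prop:contradiction} with trivial normal subgroup is exactly \cite[Theorem~1.6]{KaledinDynkin}), use Theorem~\ref{t:main-4} as the base case, and invoke Lemma~\ref{lem:specialthreegroup} for the exceptional pair $(\biDih_2,\cyc_2)$. The only cosmetic difference is that the paper jumps from $n$ straight to $2$ by choosing $x=(0,0,p_3,\ldots,p_n)$ with distinct $p_i\in\C^2\setminus\{0\}$, whereas you step down one at a time.

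Two small points are worth tightening. First, Theorem~\ref{t:main-4} is stated only for $H\neq\{1\}$, so your claimed base case does not literally cover $H=\{1\}$; but since $[K,K]\le H$ this forces $K$ cyclic, whence $G_n(K,H)=G(m,m/|H|,n)$ is an improper complex reflection group and the nonexistence of a resolution follows from \cite{Singular}. Second, the implication ``$\bigl(\C^{2(n-1)}/G_{n-1}\bigr)\times\C^2$ admits a projective symplectic resolution $\Rightarrow$ $\C^{2(n-1)}/G_{n-1}$ does'' deserves a word: one way is to note that if $Z_1\to\C^{2(n-1)}/G_{n-1}$ is any $\Q$-factorial terminalization then $Z_1\times\C^2$ is a $\Q$-factorial terminalization of the product, hence smooth by Theorem~\ref{thm:abstractexistence}, so $Z_1$ is smooth. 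Alternatively, citing \cite[Theorem~1.6]{KaledinDynkin} directly (as the paper does) yields the symplectic slice $\C^{2(n-1)}/G_{n-1}$ immediately and avoids the product altogether.
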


\begin{proof}
  If $G_n(K,H) \neq G_n(\cyc_{2},\biDih_{2})$ and $K \neq H$, then we
  choose $n-2$ distinct points $p_3, \ds, p_n$ in $\C^{2} \backslash
  \{ 0 \}$ and consider the point $(0,0,p_3,\ds, p_n)$ in
  $\C^{2n}$. The stabilizer of this point is $G_2(K,H) =
  G(K,H,\Id)$. We have shown in Theorem \ref{thm:rankfourclass} that
  all those groups $G(K,H,\alpha)$ such that $\Gamma$ is abelian
  (except for $G(\biDih_{2l},\cyc_2,\Id)$) do not admit projective
  symplectic resolutions. Now the result follows from \cite[Theorem
  1.6]{KaledinDynkin}. On the other hand, if $G_n(K,H) =
  G_n(\cyc_{2},\biDih_{2})$ with $n = 3$, then we have shown in Lemma
  \ref{lem:specialthreegroup} that the corresponding symplectic
  quotient does not admit a projective symplectic resolution. If $n >
  3$ then one can realize $G_3(\cyc_2,\biDih_{2})$ as a parabolic
  subgroup of $G_n(\cyc_{2},\biDih_{2})$ as above and the same
  argument shows that $G_n(\cyc_{2},\biDih_{2})$ does not admit a
  projective symplectic resolution.
\end{proof} 

\section{Proof of the main theorems}
\subsection{Proof of Theorem \ref{thm:main}}
Assume $G < \Sp_{2n}(\C)$ is symplectically imprimitive and
irreducible. As a consequence of Theorems \ref{t:main-4} and
\ref{t:main-g4}, $\C^{2n}/G$ cannot admit a projective symplectic
resolution if $G = G(K,H,\alpha)$ (for $n=2$) or if $G=G_n(K,H)$,
unless $G = K \wr S_n$ or $G = G(\biDih_2,\cyc_2,\Id) \simeq Q_8
\times_{\mathbf{Z}/2} D_8$.  By \cite[Theorems 2.2 and
2.9]{CohenQuaternionic}, this includes all cases where $G$ is proper.
If $G$ were not proper, then it would be a complex reflection group,
and then by \cite{Singular}, $\C^{2n}/G$ could only admit a projective
symplectic resolution if $G$ were a wreath product $(\mathbf{Z}/m) \wr
S_n$ or $G = G_4 < \Sp_4(\C)$, although the latter is excluded since
it would be symplectically primitive (as $G_4 <\GL_2(\C)$ is a
primitive complex reflection group).  Therefore, $\C^{2n}/G$ can only
admit a projective symplectic resolution if it is one of the listed
cases. On the other hand, we know that a projective symplectic
resolution exists in each of these cases. This completes the proof.

\subsection{Proof of Theorem \ref{t:four}} \label{ss:tfour}
As before, by
  \cite{Verbitsky} we can assume $G$ is a symplectic reflection group,
  and by \cite{Singular}, we can assume that $G$ is proper.  If $G <
  \Sp_2(\C)=\SL_2(\C)$, then we know that a projective symplectic
  resolution exists.  So assume $G < \Sp_{2n}(\C)$ for $n \geq 3$.  As
  explained in \S \ref{ss:cohen}, $G$ must be either of the form
  $G_n(K,H)$ or are one of the groups listed in \cite[Table
  III]{CohenQuaternionic}, where $n \leq 5$.  In the former case, the
  result follows from Theorem \ref{thm:main}. In the latter case,
  there are seven groups listed, of types $Q, R, S_1, S_2, S_3, T$,
  and $U$. The table there also lists the stabilizers in each of these
  groups of roots of the associated quaternionic root system.  For
  each group $G$ and each symplectic reflection $g \in G$, this
  stabilizer subgroup, call it $H$, is the stabilizer of generic
  vectors in the image of $g-\Id$.  The action of $H$ on the kernel of
  $g-\Id$ identifies $H$ as a subgroup of $\Sp_{2n-2}(\C)$.  By
  \cite[Theorem 1.6]{KaledinDynkin}, if $\C^{2n}/G$ admits a
  projective symplectic resolution, so does $\C^{2n-2}/H$.

  In type $Q$, we have $H =G(\cyc_4,\cyc_2,1) < \Sp_4(\C)$, and we
  showed that $\C^4/H$ does not admit a projective symplectic
  resolution in Theorem \ref{thm:main}.  Similarly, in type $S_3$, we
  have $H = G_3(\biDih_2,\cyc_2)$, which we showed does not admit a
  resolution in the same theorem (or in Lemma
  \ref{lem:specialthreegroup}). In type $T$, the group $H$ becomes a
  complex reflection group, associated to the Coxeter group of type
  $H_3$; in this cases $\C^6/H$ does not admit a projective symplectic
  resolution by the main result of \cite{Singular}.

  This reduces us to the cases $R, S_1, S_2$, and $U$, which are the
  four cases remaining, in dimensions six, eight, eight, and ten
  respectively.

\begin{remark}
  If one could show that type $S_1$ does not admit a projective
  symplectic resolution, then the same would follow for type $U$,
  since the stabilizer group $H$ mentioned above has type $S_1$. Thus,
  if the four remaining cases (as one might suspect) do not admit
  projective symplectic resolutions, it suffices only to show it for
  the three types $R, S_1$, and $S_2$.
\end{remark}

\section{Questions}

\subsection{} By definition, symplectic reflection groups are the
symplectic analogue of complex reflection groups. Therefore it is
natural to ask which properties of complex reflection groups have
natural analogues for symplectic reflection groups. In particular, one
can ask if the analogue of Steinberg's Theorem holds:

\begin{question}
  Let $(V,\omega,G)$ be a symplectic reflection group, $v \in V$ and
  $G_v = \Stab_G(v)$. Let $U$ be the symplectic complement to
  $V^{G_v}$ in $V$. Is $(U,\omega|_{U}, G_v)$ a symplectic reflection
  group?
\end{question}

\begin{remark}
  Steinberg's Theorem, together with other elementary considerations,
  show that it suffices to consider the case where $V$ is irreducible
  as a $G$-module. Furthermore, one can explicitly check for every
  complex imprimitive group that $(U,\omega|_{U}, G_v)$ is indeed a
  symplectic reflection group. Thus, it actually suffices to resolve
  the question for the complex primitive, symplectically irreducible
  symplectic reflection groups; and of these we can further restrict
  to the case of dimension at least six, since all finite subgroups of
  $\Sp_2(\C)$ are symplectic reflection groups. This narrows us down
  to checking Steinberg's theorem for the seven groups discussed in
  the previous section.  However, if it is indeed the case that the
  analogue of Steinberg's Theorem holds for symplectic reflection
  groups, it would be interesting to have a conceptual proof that does
  not rely on Cohen's classification.
\end{remark}

\subsection{} To complete the classification of symplectic reflection
groups admitting projective symplectic resolutions one needs to answer the
following three questions.

\begin{question}
  Let $G := G(K,1,\alpha) \cong K \rtimes S_2$ be a symplectically
  irreducible proper symplectic reflection group (so that $G$ belongs
  to one of the families (G),(K),(P),(Q),(U),(V) of Tables
  \ref{tab:table1} and \ref{tab:table2}). Does the quotient
  singularity $\C^4/G$ admit a projective symplectic resolution?
\end{question}

\begin{question}
Let $G$ be symplectically primitive and irreducible, but complex
imprimitive, i.e., $G < \Sp_4(\C)$ is one of the groups classified
in \cite[Lemma 3.3]{CohenQuaternionic}.  Does $\C^4/G$ admit
a projective symplectic resolution?
\end{question}

\begin{question}
  Let $G$ be one of the finitely many primitive exceptional symplectic
  reflection groups, as listed in \cite[Table
  III]{CohenQuaternionic}. Does the quotient singularity $V/G$ admit a
  projective symplectic resolution?
\end{question}

It seems likely that many of these exceptional groups $G$ will contain
parabolic subgroups $G_v$ such that $(U,\omega|_{U}, G_v)$ is known
not to admit projective symplectic resolutions. In these cases
$(V,\omega,G)$ will also not admit a projective symplectic resolution.
In particular, using the stabilizer groups discussed in \S
\ref{ss:tfour}, as we mentioned, this already allows us to eliminate
types $Q, S_3$, and $T$.  Thus there remain at most ten groups in
\cite[Table III]{CohenQuaternionic} to check.

%\small{
%\bibliography{biblo}{}
%\bibliographystyle{plain} }

\def\cprime{$'$} \def\cprime{$'$} \def\cprime{$'$} \def\cprime{$'$}
  \def\cprime{$'$} \def\cprime{$'$} \def\cprime{$'$} \def\cprime{$'$}

%\bibliographystyle{plain}
%\bibliography{biblo}

\begin{thebibliography}{10}

\bibitem{Beauville}
A.~Beauville.
\newblock Symplectic singularities.
\newblock {\em Invent. Math.}, 139(3):541--549, 2000.

\bibitem{Singular}
G.~Bellamy.
\newblock On singular {C}alogero-{M}oser spaces.
\newblock {\em Bull. Lond. Math. Soc.}, 41(2):315--326, 2009.

\bibitem{smoothsra}
G.~Bellamy and T.~Schedler.
\newblock A new linear quotient of {$\mathbf{C}^4$} admitting a symplectic
  resolution.
\newblock {\em Math. Zeit.}, 273 (3-4):753--769, 2013.

\bibitem{BertinNotes}
J.~Bertin.
\newblock The punctual hilbert scheme: an introduction.
\newblock In {\em Geometric Methods in Representation Theory, I}, volume~24 of
  {\em S\'eminaires et Congr\'es}, pages 1--100. Soc. Math. France, Paris,
  2010.

\bibitem{SympMcKay}
R.~V. Bezrukavnikov and D.~B. Kaledin.
\newblock Mc{K}ay equivalence for symplectic resolutions of quotient
  singularities.
\newblock {\em Tr. Mat. Inst. Steklova}, 246(Algebr. Geom. Metody, Svyazi i
  Prilozh.):20--42, 2004.

\bibitem{BCHM}
Caucher Birkar, Paolo Cascini, Christopher~D. Hacon, and James McKernan.
\newblock Existence of minimal models for varieties of log general type.
\newblock {\em J. Amer. Math. Soc.}, 23(2):405--468, 2010.

\bibitem{CohenQuaternionic}
A.~M. Cohen.
\newblock Finite quaternionic reflection groups.
\newblock {\em J. Algebra}, 64(2):293--324, 1980.

\bibitem{CBKleinian}
W.~Crawley-Boevey.
\newblock On the exceptional fibres of {K}leinian singularities.
\newblock {\em Amer. J. Math.}, 122(5):1027--1037, 2000.

\bibitem{EG}
P.~Etingof and V.~Ginzburg.
\newblock Symplectic reflection algebras, {C}alogero-{M}oser space, and
  deformed {H}arish-{C}handra homomorphism.
\newblock {\em Invent. Math.}, 147(2):243--348, 2002.

\bibitem{FuSurvey}
B.~Fu.
\newblock A survey on symplectic singularities and symplectic resolutions.
\newblock {\em Ann. Math. Blaise Pascal}, 13(2):209--236, 2006.

\bibitem{GK}
V.~Ginzburg and D.~Kaledin.
\newblock Poisson deformations of symplectic quotient singularities.
\newblock {\em Adv. Math.}, 186(1):1--57, 2004.

\bibitem{ItoNakamuraHilbert}
Y.~Ito and I.~Nakamura.
\newblock Hilbert schemes and simple singularities.
\newblock In {\em New trends in algebraic geometry ({W}arwick, 1996)}, volume
  264 of {\em London Math. Soc. Lecture Note Ser.}, pages 151--233. Cambridge
  Univ. Press, Cambridge, 1999.

\bibitem{KaledinDynkin}
D.~Kaledin.
\newblock On crepant resolutions of symplectic quotient singularities.
\newblock {\em Selecta Math. (N.S.)}, 9(4):529--555, 2003.

\bibitem{LS}
M.~Lehn and C.~Sorger, \emph{A symplectic resolution for the binary tetrahedral
  group}, arXiv:0810.3225v2, 2008.


\bibitem{NakajimaBook}
H.~Nakajima.
\newblock {\em Lectures on {H}ilbert {S}chemes of {P}oints on {S}urfaces},
  volume~18 of {\em University Lecture Series}.
\newblock American Mathematical Society, Providence, RI, 1999.

\bibitem{Namikawa}
Y.~Namikawa.
\newblock Poisson deformations of affine symplectic varieties.
\newblock {\em Duke Math. J.}, 156(1):51--85, 2011.

\bibitem{Verbitsky}
M.~Verbitsky.
\newblock Holomorphic symplectic geometry and orbifold singularities.
\newblock {\em Asian J. Math.}, 4(3):553--563, 2000.

\end{thebibliography}
\end{document}